\renewcommand{\baselinestretch}{1.25}
\newtheorem{theorem}{Theorem}[section]
\newtheorem{corollary}[theorem]{Corollary}
\newtheorem{lemma}[theorem]{Lemma}
\newtheorem{proposition}[theorem]{Proposition}
\newtheorem{problem}[theorem]{Problem}
\crefname{conjecture}{Conjecture}{Conjectures}
\crefname{question}{Question}{Questions}
\theoremstyle{remark}
\newtheorem{remark}[theorem]{Remark}
\numberwithin{equation}{section}
\DeclarePairedDelimiter\floor{\lfloor}{\rfloor}
\DeclareMathOperator{\conv}{conv}
\DeclareMathOperator{\ver}{vert}
\DeclareMathOperator{\st}{star}
\DeclareMathOperator{\lk}{link}
\DeclareMathOperator{\aff}{aff}
\newcommand{\Pl}{\mathcal{P}}
\newcommand{\C}{\mathcal{C}}
\date{\today}
\title[Almost simplicial polytopes]{Almost simplicial polytopes: the lower and upper bound theorems}
\author{Eran Nevo}
\address{Institute of Mathematics, Hebrew University of Jerusalem}
\email{\texttt{nevo@math.huji.ac.il}}
\thanks{Research of E.~Nevo was partially supported by Israel Science Foundation grants ISF-805/11 and ISF-1695/15.}
\author{Guillermo Pineda-Villavicencio}
\address{Centre for Informatics and Applied Optimisation, Federation University Australia}
\email{\texttt{work@guillermo.com.au}}
\author{Julien Ugon}
\address{School of Information Technology, Deakin University}
\email{\texttt{julien.ugon@deakin.edu.au}}
\author{David Yost}
\address{Centre for Informatics and Applied Optimisation, Federation University Australia}
\email{\texttt{d.yost@federation.edu.au}}
\keywords{polytope; simplicial polytope; almost simplicial polytope; Lower Bound theorem; Upper Bound theorem; graph rigidity;  $h$-vector; $f$-vector}
\subjclass[2010]{Primary 52B05; Secondary 52B12, 52B22}
\begin{document}
\maketitle
\begin{abstract}
We study $n$-vertex $d$-dimensional polytopes with at most one nonsimplex facet with, say, $d+s$ vertices, called {\it almost simplicial polytopes}. We provide tight lower and upper bound theorems for these polytopes as functions of $d,n$ and $s$,
thus generalizing the classical Lower Bound Theorem by Barnette and Upper Bound Theorem by McMullen, which treat the case of $s=0$.
We characterize the minimizers and provide examples of maximizers, for any $d$.  Our construction of maximizers is a generalization of cyclic polytopes, based on a suitable variation of the moment curve, and is of independent interest.
\end{abstract}

\section{Introduction}
In 1970 McMullen \cite{McM70a} proved the Upper Bound Theorem (UBT) for {\it simplicial polytopes}, polytopes with each facet being a simplex, while between 1971 and 1973 Barnette \cite{Bar71,Bar73} proved the Lower Bound Theorem (LBT) for the same polytopes. Both results are major achievements in the combinatorial theory of polytopes; see, for example, the books \cite{Gru03, Zie95} for further details and discussion.

These results can be phrased as follows: let $C(d,n)$ (resp. $S(d,n)$) denote a cyclic (resp.~stacked) $d$-polytope on $n$ vertices, and for a polytope $P$ let $f_i(P)$ denote the number of its $i$-dimensional faces. Then the classical LBT and UBT read as follows.
\begin{theorem}[Classical LBT and UBT]
For any simplicial $d$-polytope $P$ on $n$ vertices, and any $0\le i\le d-1$,
\[f_i(S(d,n))\le f_i(P) \le f_i(C(d,n))
.\]
\end{theorem}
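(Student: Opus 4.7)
The plan is to reduce both inequalities to bounds on the $h$-vector, exploiting the Dehn--Sommerville relations for simplicial polytopes. For a simplicial $d$-polytope $P$, define the $h$-vector by
\[\sum_{i=0}^{d} h_i(P)\,t^{d-i} = \sum_{i=0}^{d} f_{i-1}(P)\,(t-1)^{d-i}.\]
The Dehn--Sommerville relations give $h_i(P)=h_{d-i}(P)$, so the full $f$-vector is determined by $h_0=1$, $h_1=n-d$, $h_2,\ldots,h_{\lfloor d/2\rfloor}$. Both inequalities in the theorem then reduce, via this unimodal transformation of coordinates, to coordinate-wise comparisons of these initial entries against those of $C(d,n)$ and $S(d,n)$.

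For the upper bound, I would follow McMullen's argument via line shellings. A generic linear functional on $P$ induces a shelling of $\partial P$, and the number of facets whose restriction face has size $i$ equals $h_i(P)$. A combinatorial counting argument, bounding restriction sizes by the number of ``visible'' vertices along the sweep, yields $h_i(P)\le\binom{n-d+i-1}{i}$ for $0\le i\le\lfloor d/2\rfloor$. Cyclic polytopes are $\lfloor d/2\rfloor$-neighbourly and therefore saturate this bound, whence $f_i(P)\le f_i(C(d,n))$ after transforming back via Dehn--Sommerville.

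For the lower bound, the cleanest route is Kalai's rigidity proof. One shows that the graph of a simplicial $d$-polytope (with $d\ge 3$) is generically $d$-rigid in $\mathbb{R}^d$, i.e., the space of infinitesimal motions of its natural embedded framework has dimension exactly $\binom{d+1}{2}$. This yields $h_2(P)\ge h_1(P)$, equivalently $f_1(P)\ge d\,f_0(P)-\binom{d+1}{2}$. Combined with Dehn--Sommerville and a direct computation of the $h$-vector of $S(d,n)$ (which satisfies $h_2=h_1$, since each stacking operation preserves this difference), one deduces the coordinate-wise inequalities $f_i(S(d,n))\le f_i(P)$ for all $i$.

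The hardest step is the rigidity claim underlying the LBT: the induction on $d$ must treat vertex figures, which are simplicial $(d-1)$-spheres but not necessarily polytopes, and it relies crucially on the fact that generic $d$-rigidity depends only on the underlying graph. Kalai handles this by a gluing argument on antistars of vertices, using that cones over rigid frameworks remain rigid and that suitably overlapping rigid subframeworks unite to a rigid framework. Equality analysis in both parts then identifies cyclic polytopes (via neighbourliness) as maximizers and stacked polytopes (as the unique simplicial $d$-polytopes with $h_2=h_1$, at least for $d\ge 4$) as minimizers.
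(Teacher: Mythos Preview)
The paper does not give its own proof of this classical theorem; it is stated in the introduction as background, with citations to McMullen~\cite{McM70a} and Barnette~\cite{Bar71,Bar73}. So there is no ``paper's proof'' to compare against, only the paper's later generalizations to ASP.

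Your UBT sketch is essentially the standard modern account and is fine in outline. Your LBT argument, however, has a genuine gap. Establishing $g_2(P)\ge 0$ (equivalently $f_1(P)\ge d f_0(P)-\binom{d+1}{2}$) via generic $d$-rigidity gives the lower bound only for $i=1$. You then assert that ``combined with Dehn--Sommerville and a direct computation of the $h$-vector of $S(d,n)$'' one obtains $f_i(S(d,n))\le f_i(P)$ for all $i$. This does not follow: Dehn--Sommerville determines the $f$-vector from $h_0,\ldots,h_{\lfloor d/2\rfloor}$, but from $g_2\ge 0$ alone you have no control over $h_3,\ldots,h_{\lfloor d/2\rfloor}$. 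The inequality $h_k\ge h_1$ for all $1\le k\le d-1$ is the Generalized Lower Bound Theorem, a much deeper statement than what rigidity gives.

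What is actually needed is the McMullen--Perles--Walkup (MPW) reduction: one applies the edge bound $g_2\ge 0$ not just to $P$ but to all vertex links (which are simplicial $(d-1)$-polytopes, or more generally homology spheres), and then a double-counting of incidences $(v,A)$ with $v$ a vertex of an $i$-face $A$ propagates the $i=1$ inequality up to all $i$. Your last paragraph gestures at induction on $d$ through vertex figures, but you frame it as a difficulty in proving rigidity rather than as the mechanism (MPW) that converts the edge bound into bounds on all $f_i$. This distinction matters; the paper itself uses precisely an MPW-type argument (see the proof of Theorem~\ref{thm:LBT}) in its ASP generalization, and that is where the inductive passage through links does the real work.
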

The numbers $f_i(S(d,n))$ and $f_i(C(d,n))$ are explicit known functions of $(d,n,i)$, to be discussed later.

We generalize the UBT and LBT to the following context:
consider a pair $(P,F)$ where $P$ is a polytope, $F$ is a facet of $P$, and all facets of $P$ different from $F$ are simplices. We call such a polytope
$P$ an \emph{almost simplicial polytope} (ASP) and a pair $(P,F)$ an ASP-pair. Since every ridge of the facet $F$ is shared with another facet of $P$, the facet $F$ is necessarily simplicial. We will be interested only in the combinatorics of $P$.

Let $\mathcal{P}(d,n,s)$ denote the family of $d$-polytopes $P$ on $n$-vertices such that $(P,F)$ is an ASP-pair, where $F$ has $d+s$ vertices ($s\ge 0$). Note that $\mathcal{P}(d,n,0)$ consists of the simplicial $d$-polytopes on $n$ vertices.
In this paper, we define certain polytopes $C(d,n,s),S(d,n,s)\in \Pl(d,n,s)$, explicitly compute their face numbers, and show the following.
\begin{theorem}[LBT and UBT for ASP]\label{thm:Main}
For any $d,n,s$, any polytope $P\in \Pl(d,n,s)$, and any $0\le i\le d-1$,
\[f_i(S(d,n,s))\le f_i(P) \le f_i(C(d,n,s))
.\]
Further, for $d\geq 4$, the polytopes $P\in \Pl(d,n,s)$ with $f_i(P)=f_i(S(d,n,s))$ for \emph{some} $1\le i\le d-1$ are characterized combinatorially, and satisfy the above equality for \emph{all} $0\le i\le d-1$.
\end{theorem}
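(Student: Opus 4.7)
The plan is to reduce both bounds to the classical LBT and UBT via a ``pyramid completion'' construction, sharpen the lower bound with a rigidity argument, and characterize minimizers by induction on $n$. Given an ASP-pair $(P, F)$ with $P \in \Pl(d,n,s)$, form a simplicial $d$-polytope $\hat P$ on $n+1$ vertices by placing a new vertex $v$ beyond $F$. Because $F$ is simplicial, the new facets of $\hat P$ (pyramids over facets of $F$) are $(d-1)$-simplices, so $\hat P$ is simplicial; the $f$-vectors satisfy $f_i(\hat P) = f_i(P) + f_{i-1}(F)$ for $1 \le i \le d-2$, with the obvious corrections at $i = 0$ and $i = d-1$.

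For the \emph{upper bound}, combining the classical UBT applied to $\hat P$ with the classical LBT applied to $F$ (a simplicial $(d-1)$-polytope on $d+s$ vertices) yields $f_i(P) \le f_i(C(d, n+1)) - f_{i-1}(S(d-1, d+s))$; matching this with $f_i(C(d,n,s))$ and verifying attainment reduces to showing that the authors' modified-moment-curve construction yields $\widehat{C(d,n,s)} \cong C(d, n+1)$ with non-simplex facet combinatorially $S(d-1, d+s)$, via a suitable Gale-type evenness analysis on the perturbed curve. For the \emph{lower bound}, the pyramid reduction alone yields only $f_1(P) \ge dn - \binom{d+1}{2} - s$, short of the expected $f_1(S(d,n,s)) = dn - \binom{d+1}{2}$ (for $d \ge 4$) by the additive $s$. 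To close the gap, one establishes generic $d$-rigidity of $G(P)$ directly: Kalai's theorem applied to $F$ in dimension $d - 1$ (valid since $d - 1 \ge 3$) gives that $G(F)$ is generically $(d-1)$-rigid, and a rigidity vertex-reduction argument then transfers the $d$-rigidity of $G(\hat P)$ to $G(P) = G(\hat P) \setminus v$, exploiting that the neighborhood of $v$ induces the $(d-1)$-rigid graph $G(F)$. This yields $f_1(P) \ge dn - \binom{d+1}{2}$; the bounds on $f_i$ for $2 \le i \le d-1$ then follow from nonnegativity of the $g_2$-number associated to the $h$-vector of the simplicial $(d-1)$-ball $P' = \partial P \setminus \{F\}$, combined with the classical LBT on $F$.

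For the \emph{minimizer characterization} when $d \ge 4$, $h$-vector analysis of $P'$ shows that equality in any $f_i$ with $1 \le i \le d-1$ forces equality in $f_1$, reducing to the equality case of the rigidity bound. This equality forces both $F \cong S(d-1, d+s)$ combinatorially (equality case of the LBT on $F$) and the existence of a degree-$d$ vertex $w$ of $P$ whose link is a $(d-1)$-simplex (equality case of Kalai's rigidity inequality). For $n > d + s + 1$, one can choose $w \notin V(F)$; deleting $w$ yields an ASP in $\Pl(d, n-1, s)$ satisfying the same equality, so induction on $n$ identifies $P$ combinatorially with $S(d, n, s)$ (the base case $n = d + s + 1$ forces $P$ to be the pyramid over $F$).

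\emph{Main obstacle.} The central difficulty is the sharp rigidity-based lower bound and its equality case: the pyramid reduction is short by the additive $s$, and recovering this missing $s$ requires the vertex-reduction argument combining the $d$-rigidity of $G(\hat P)$ with the $(d-1)$-rigidity of $G(F)$, which crucially uses $d \ge 4$. The equality-case analysis, and in particular ensuring the existence of a degree-$d$ vertex off $F$ needed to drive the induction, is the most delicate step.
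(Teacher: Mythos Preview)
Your upper-bound argument has a genuine gap. The inequality $f_i(P)\le f_i(C(d,n+1))-f_{i-1}(S(d-1,d+s))$ is valid, but it does \emph{not} equal $f_i(C(d,n,s))$: the two ingredients cannot be tight simultaneously. If $\hat P$ is cyclic then the link $F=\hat P/v$ is a neighborly $(d-1)$-polytope, not a stacked one; conversely, in the paper's maximizer $C(d,n,s)$ the nonsimplex facet $F$ is neighborly (it inherits $\lfloor(d-1)/2\rfloor$-neighborliness), so your assertion that the facet is ``combinatorially $S(d-1,d+s)$'' is false for $d\ge 5$, $s\ge 1$. What your pyramid reduction actually yields is precisely the Billera--Lee polytope-antistar bound, which the paper explicitly notes is \emph{strictly weaker} than the ASP bound. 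The paper's proof instead bounds the $h$-vector of the ball $P'=\partial P\setminus\{F\}$ directly, either via Adiprasito--Sanyal or via a shelling argument (the key new ingredient being Lemma~\ref{lem:key}, comparing $h$-numbers of $Q$, $Q/v$, $F$, $F/v$ along a special line shelling), and constructs $C(d,n,s)$ as a genuinely new object on a modified moment curve, with its own Gale-evenness description.

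Your minimizer characterization also fails at $d=4$. You argue that equality forces $F\cong S(d-1,d+s)$ via the equality case of the LBT on $F$; but $F$ is a simplicial $(d-1)$-polytope, and Kalai's characterization of LBT equality as stackedness requires dimension $\ge 4$, i.e.\ $d\ge 5$. For $d=4$ every simplicial $3$-polytope satisfies $f_1=3f_0-6$, so nothing forces $F$ to be stacked---and indeed the paper exhibits the pyramid over an arbitrary simplicial $3$-polytope as a minimizer with $F$ non-stacked. The paper therefore splits the analysis: for $d>4$ it uses Kalai's monotonicity $g_2(P)\ge g_2(F)$ plus Kalai's stackedness criterion to conclude $F$ is stacked and the refined sphere is stacked; for $d=4$ it gives a separate and more delicate argument via prime decomposition, rigidity of a class $\mathcal C_{d-1}$ of homology balls, and a bistellar-move reduction. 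Your induction-on-$n$ scheme (find a simplex vertex off $F$ and delete it) does not survive the $d=4$ case, where the correct minimizers include pyramids over arbitrary prime $3$-polytopes.
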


The characterization of the equality case above generalizes Kalai's result \cite{Kal87} that for $d\geq 4$ equality in the classical LBT holds for some $1\le i\le d-1$ if and only if $P$ is stacked.
The polytopes $C(d,n,s)$ form an ASP analog of cyclic polytopes and satisfy a combinatorial Gale-evenness type description of their facets.

The combinatorics of $P$ could be also understood by looking at certain triangulations of $F$. Consider an ASP-pair $(P,F)$. Projectively transform $P$ into a combinatorially equivalent polytope such that the orthogonal projection of $\mathbb R^{d}$ onto the hyperplane spanned by the facet $F$ maps $P\setminus F$ into the relative interior of $F$; see \cite[Ex.~2.18]{Zie95}. Think of $F$ as sitting in $(\mathbb R^{d-1},0)$. Under this setting,  the facet $F$ admits a regular triangulation $\C$ which is obtained by a lifting of the vertices of $\C$  which leaves the vertices of $F$ fixed; see \cite[Sec.~17.3]{GooORo04}. Then the polytope $P$ becomes the convex hull of the lifted vertices of $\C$. Consequently, specifying the ASP-pair $(P,F)$ amounts to specifying the aforementioned triangulation of $F$.

We pay special attention to simplicial balls of the form $P':=\partial P \setminus \{F\}$, which are a subfamily of certain balls considered by Billera and Lee \cite{BilLee81a} in their study of polytope pairs. In particular, their results give tight upper and lower bound theorems for the face numbers of simplicial $(d-1)$-dimensional balls of the ``\emph{polytope-antistar}" form; that is, balls of the form $\partial Q\setminus \{v\}$, or $\partial Q\setminus v$ for simplicity, where $Q$ is a simplicial $d$-polytope and $v$ is a vertex of $Q$ that is deleted.
These bounds are given as functions of $d,f_0(\partial Q\setminus v),f_0(Q/v)$, where $Q/v$ denotes the vertex figure of $v$ in $Q$. For an  ASP-pair $(P,F)$, let $Q$ be obtained from $P$ by stacking a pyramid over $F$ with a new vertex $v$; the  stacking operation is defined in detail in \cref{sec:LBT}. Then
$F\cong Q/v$ and $P'=\partial P \setminus \{F\} = \partial Q\setminus v$. Thus, our balls $P'$ form a subfamily of the balls $\partial Q\setminus v$ considered in \cite{BilLee81a}. The bounds we obtain in Theorem~\ref{thm:Main} are strictly stronger than those of \cite{BilLee81a} which apply to all polytope-antistar balls.

Let $f(P)=(1,f_0(P),f_1(P),\cdots,f_{d-1}(P))$ denote the {\it $f$-vector} of $P$, a vector recording the face numbers of $P$.
The following problem naturally arises.
\begin{problem}
Characterize the pairs of $f$-vectors $(f(P),f(F))$ for ASP-pairs $(P,F)$.
\end{problem}
A solution to the problem above would generalize the well known $g$-theorem characterizing the face numbers of simplicial polytopes, conjectured by McMullen \cite{McMullen-g-conj} and proved by Billera-Lee \cite{Billera-Lee} (sufficiency) and Stanley \cite{Sta80} (necessity); the $g$-theorem solves the case $s=0$ and provides some restrictions when $s>0$.
We leave this general problem to a future study.
We remark that for the corresponding problem for the larger family of polytope pairs \cite{BilLee81a}, currently there is no conjectured characterization, after Kolins' \cite{Kolins11} counterexamples to the characterization conjectured by Billera and Lee \cite{BilLee81a}.

The proof of the LBT for ASP and the characterization of the equality cases are based on framework-rigidity arguments (cf.~Kalai~\cite{Kal87}) and on an adaptation of the well known McMullen-Perles-Walkup reduction (MPW reduction) \cite[Sec.~5]{Kal87} to ASP; see Section~\ref{sec:LBT}.

The numerical bounds obtained in the UBT for ASP are a special case of a recent result of Adiprasito and Sanyal~\cite[Thm.~3.9]{AdiSan14}, who proved the bounds for homology balls whose boundary is an induced subcomplex. While their proof relies on machinery from commutative algebra, our proof is elementary and is based on a suitable shelling of $P$. Further, our construction of maximizers $C(d,n,s)$ is a generalization of cyclic polytopes, based on a suitable variation of the moment curve, and is of independent interest;
see Section~\ref{sec:UBT}.

Our proof techniques are likely to extend beyond ASP to polytopes $P$ where \emph{all} non-simplex proper faces $F_{1},\ldots, F_{n}$ are facets, but probably not beyond that, as a key fact that  we use in this paper, and probably will need in the extended setting, is that the $f$-vector of $P$ can be recovered from  the $f$-vector of $P':=\partial P\setminus \{F_1,\ldots,F_n \}$ and $n$.

\section{Preliminaries}\label{sec:prelim}
For undefined terminology and notation, see \cite{Zie95} for polytopes and complexes, or \cite[Sec.~2]{Kal87} for framework rigidity.

\subsection{Polytopes and simplicial complexes}
The $k$-dimensional faces of a polyhedral complex $\Delta$ are called
$k$-faces, where the empty face has dimension $-1$.
For  a simplicial complex $\Delta$ of dimension $d-1$, the numbers $f_k(\Delta)$ are then related to the $h$-numbers
$h_k(\Delta):=\sum_{i=0}^k (-1)^{k-i}{d-i \choose k-i}f_{i-1}(\Delta)$ by
\begin{align}\label{eq:fk_hk}
f_{k-1}(\Delta)=\sum_{i=0}^k {d-i\choose k-i}h_i(\Delta).
\end{align}
The $h$-vector of $\Delta$, $(\ldots,h_k,h_{k+1},\ldots)$, can be considered as an infinite sequence if we let $h_k(\Delta)=0$ for $k>d$ and $k<0$.
The $g$-numbers are defined by $g_k(\Delta)=h_k(\Delta)-h_{k-1}(\Delta)$.

For an ASP pair $(P,F)$, where $P$ is $d$-dimensional,
the following version of the Dehn-Somerville equations applies to the complex  $P'=\partial P\setminus \{F\}$.
\begin{proposition}[{\cite[Thm.~18.3.6]{GooORo04}}, Dehn-Somerville Equations for $P'$]\label{prop:Dehm-Somerville}
The $h$-vector of the simplicial $(d-1)$-ball
$P'$ with boundary $\partial F$
satisfies for $k=0,\ldots,d$
\begin{align}\label{eq:DSEq-hP}
h_k(P')&=h_{d-k}(P')+g_{k}(\partial F).
\end{align}
\end{proposition}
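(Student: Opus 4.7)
My plan is to convert the ASP-pair $(P,F)$ into a simplicial sphere by stacking a shallow pyramid on $F$ with a new apex $v$. This produces a simplicial $d$-polytope $Q$ whose boundary $\Sigma:=\partial Q$ is a simplicial $(d-1)$-sphere. By construction, the antistar of $v$ in $\Sigma$ coincides with $P'$, the link satisfies $\lk(v,\Sigma)=\partial F$, and the closed star is $\st(v,\Sigma)=v*\partial F$. This gives the decomposition $\Sigma=P'\cup\st(v,\Sigma)$ with intersection $P'\cap\st(v,\Sigma)=\partial F$.

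Next, I would transport this decomposition from $f$-vectors to $h$-vectors. Inclusion-exclusion on face numbers gives $f_{i-1}(\Sigma)=f_{i-1}(P')+f_{i-1}(v*\partial F)-f_{i-1}(\partial F)$ for each $i$, and applying the linear combination with coefficients $(-1)^{k-i}\binom{d-i}{k-i}$ that defines $h_k$ in dimension $d-1$ yields a matching identity for $h_k$. Two elementary generating-function identities deal with the fact that $\partial F$ lives in dimension $d-2$ rather than $d-1$: the standard cone identity gives $h_k(v*\partial F)=h_k(\partial F)$, and a short Pascal computation shows that $\partial F$ viewed through the dimension-$d$ coefficients contributes $h_k(\partial F)-h_{k-1}(\partial F)=g_k(\partial F)$ instead of its intrinsic $h_k(\partial F)$. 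Combining these gives the clean identity
\[
h_k(\Sigma)=h_k(P')+h_{k-1}(\partial F).
\]

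To conclude, I apply the classical Dehn-Somerville relations to both spheres: $h_k(\Sigma)=h_{d-k}(\Sigma)$ for the $(d-1)$-sphere $\Sigma$, and $h_{d-k-1}(\partial F)=h_k(\partial F)$ for the $(d-2)$-sphere $\partial F$. Substituting into the displayed identity,
\[
h_k(P')=h_{d-k}(\Sigma)-h_{k-1}(\partial F)=h_{d-k}(P')+h_{d-k-1}(\partial F)-h_{k-1}(\partial F)=h_{d-k}(P')+g_k(\partial F),
\]
which is precisely \eqref{eq:DSEq-hP}.

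The only delicate point is the bookkeeping of the dimension parameter when passing from $f$-vectors to $h$-vectors: $\Sigma$, $P'$ and $v*\partial F$ all live naturally in dimension $d-1$, whereas $\partial F$ has dimension $d-2$, so the $h$-number of $\partial F$ produced by the inclusion-exclusion is shifted with respect to the intrinsic $h$-vector of $\partial F$. Recognising that this shift is exactly $g_k(\partial F)$ is the main content of the computation; everything else reduces to routine applications of Dehn-Somerville for the spheres $\Sigma$ and $\partial F$.
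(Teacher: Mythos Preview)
Your proof is correct. The paper does not supply its own proof of this proposition---it is quoted as \cite[Thm.~18.3.6]{GooORo04}---so there is nothing to compare against directly. That said, your key identity $h_k(\Sigma)=h_k(P')+h_{k-1}(\partial F)$ is exactly the relation the paper records later as \eqref{eq:hQ=P+F} (with $Q$ in place of $\Sigma$ and $F$ in place of $\partial F$), derived there from the same face decomposition you use; so your argument is entirely in the spirit of the paper and is the standard derivation of Dehn--Somerville for balls from Dehn--Somerville for spheres.
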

Note that $h_{k}(P')=0$ and $h_k(\partial F)=0$  for $k\ge d$ and $h_{d-1}(\partial F)=1$.

We proceed with a number of definitions related to simplicial complexes. Let $2^A$ denote the simplicial complex generated by the set $A$; it is a simplex. Sometimes we abbreviate this complex by $A$, when the context is clear.
Say $\Delta$ is \emph{pure} if all its maximal faces, called \emph{facets}, have the same dimension, and a pure simplicial complex $\Delta$ is \emph{shellable} if its facets can be ordered $F_1,F_2,\ldots$ such that for each $j>1$, $2^{F_j}$ intersects the complex $\cup_{i<j}2^{F_i}$ in a pure codimension $1$ subcomplex of $2^{F_j}$. Such an order is called a \emph{shelling order} or \emph{shelling process} of $\Delta$.
For a shelling order, the set of faces $2^{F_j} \setminus \cup_{i<j}2^{F_i}$ has a unique minimal element, called the \emph{restriction face} of $F_j$, denoted $R_j$.
For \emph{any} shelling of $\Delta$, $h_i(\Delta)$ equals the number of facets in the shelling  whose restriction face has size $i$; cf. \cite[Thm.~8.19]{Zie95}. Note that $P'$ is shellable, by a Bruggesser-Mani line shelling \cite[Prop.~2]{BruMan71}.

The \emph{link} of a face $F$ in the simplicial complex $\Delta$ is $\lk_{\Delta}(F):=\{T\in \Delta:\ T\cap F =\emptyset,\ F\cup T\in \Delta\}$, and its \emph{star}, $\st_{\Delta}(F)$ is the complex $\cup_{F\subseteq T}2^T$. Thus, using the \emph{join} operator on simplicial complexes, we obtain $2^F * \lk_{\Delta}(F)=\st_{\Delta}(F)$. For a general polyhedral complex, the star of a face $F$ is the polyhedral subcomplex formed by all faces containing $F$, and their faces.
For a vertex $v$ in a polytope $Q$, its \emph{vertex figure}
$Q/v$ is a codimension $1$ polytope obtained by intersecting $Q$ with a hyperplane $H$ \emph{below} $v$, that is, $v$ is on one side of $H$ and the other vertices of $Q$ are on the other side. If $\st_Q(v)$ is simplicial then the boundary complex of $Q/v$ coincides with $\lk_{Q}(v)$.

A subcomplex $K$ of $\Delta$ is {\it induced} if it contains all the faces in $\Delta$ which only involve vertices in $K$. Note that, for an ASP-pair $(P,F)$, $\partial F$ is an induced subcomplex of $P'$, by convexity.

The underlying set $|\mathcal{C}|$ of a polyhedral complex $\mathcal{C}$ is the point set $\cup_{Q\in \mathcal{C}} Q$ of its geometric realization.
A \emph{refinement} (or subdivision) of $\mathcal{C}$ is another polyhedral complex $\mathcal{D}$ such that $|\mathcal{D}|=|\mathcal{C}|$ and for any face $F\in \mathcal{D}$ there exists a face $T\in \mathcal{C}$ such that $|F|\subseteq |T|$.

A simplicial complex $\Delta$ is a \emph{homology sphere} (over a fixed field $\mathbf{k}$) if for any face $F\in \Delta$, the reduced homology groups $\tilde{H}_i(\lk_{\Delta}(F); \mathbf{k})\cong \tilde{H}_i(S^{\dim \Delta -\dim F -1}; \mathbf{k})$ for all $i$, where $S^j$ is the $j$-dimensional sphere.
Say $\Delta$ is a \emph{homology ball} if $\tilde{H}_i(\lk_{\Delta}(F); \mathbf{k})$ vanishes for $i<\dim \Delta -\dim F -1$ and is isomorphic to either $0$ or $\mathbf{k}$ for $i=\dim \Delta -\dim F -1$. Furthermore, the boundary complex $\partial \Delta$ of $\Delta$, consisting of all faces $F$ for which $\tilde{H}_{\dim \Delta -\dim F -1}(\lk_{\Delta}(F); \mathbf{k})=0$, is a homology sphere (of codimension $1$). In particular, simplicial spheres (resp. balls) are homology spheres (resp. balls). For  the ASP-pair $(P,F)$, the complex $P':=\partial P \setminus \{F\}$ is a shellable simplicial $(d-1)$-ball and its boundary complex $\partial P'$, which coincides with $\partial F$, is a homology sphere.

\subsection{Rigidity}
We mostly follow the presentation in Kalai's paper \cite{Kal87}.
Let $G=(V,E)$ be a graph, and $\textrm{dist}(a,b)$ denote Euclidean distance
between points $a$ and $b$ in a Euclidean space.
A $d$-embedding $\alpha:V\rightarrow \mathbb{R}^{d}$ is called \emph{rigid}
if there exists an $\varepsilon>0$ such that if $\beta:V\rightarrow
\mathbb{R}^{d}$ satisfies $\textrm{dist}(\alpha(v),\beta(v))<\varepsilon $ for every
$v\in V$ and $\textrm{dist}(\beta(u),\beta(w))=\textrm{dist}(\alpha(u),\alpha(w))$ for every $\{u,w\}\in E$,
then $\textrm{dist}(\beta(u),\beta(w))=\textrm{dist}(\alpha(u),\alpha(w))$ for every $u,w\in V$. The graph $G$ is said to be \emph{generically d-rigid} if the set of its rigid
$d$-embeddings is open and dense in the metric vector space
of all of its $d$-embeddings.
Given a $d$-embedding $\alpha:V\rightarrow \mathbb{R}^{d}$, a \emph{stress}
on $\alpha$ is a function $w:E\rightarrow \mathbb{R}$ such that for
every vertex $v\in V$
\[\sum_{u:\{v,u\}\in E}w(\{v,u\})(\alpha(v)-\alpha(u)) =0.\]
The stresses on $\alpha$ form a vector space, called the \emph{stress space}.
Its dimension is the same for all generic $d$-embeddings (namely, for an open and dense set in the space of all $d$-embeddings of $G$). A graph
$G$ is called \emph{generically d-stress free} if this dimension is zero.

If a generic $\alpha:V\rightarrow \mathbb{R}^{d}$ is rigid, then
$f_1(G)\ge df_0(G)-{d+1 \choose 2}$ \cite[Sec.~3]{AsiRot78}.
Thus, if $\Delta$ is a simplicial complex of dimension $d-1$ whose $1$-skeleton is generically $d$-rigid, then
$f_1(\Delta)\ge df_0(\Delta)-{d+1 \choose 2}$, and $g_2(\Delta)$ is the dimension of the stress space of any generic embedding.
Based on these observations for $\Delta$ the boundary of a simplicial $d$-polytope with $d\ge 3$, and more general complexes, Kalai~\cite{Kal87} extended the LBT and characterized the minimizers.

For a $d$-polytope $P$ with a simplicial $2$-skeleton, the so called toric $g_2(P)$ coincides with $g_2(\partial P)=f_1(P)-df_0(P)+\binom{d+1}{2}$. By
a result of Alexandrov (cf.~Whiteley~\cite{Whi84}), the toric $g_2(P)$
 equals  the dimension of the stress space of the $1$-skeleton of $P$.

For our LBT for ASP, we will need the following very special case of Kalai's monotonicity\footnote{Kalai's monotonicity conjecture on the toric $g$-polynomials, asserting that $g(P)\ge g(F)g(P/F)$ coefficientwise for any face $F$ of $P$, was first proved for rational polytopes by Braden and MacPherson \cite{BraMac99}. Later, using the theory of combinatorial intersection homology, Braden \cite{Bra06} proved Kalai's conjecture in full generality.},
which Kalai proved using rigidity arguments.

\begin{theorem}[Kalai's Monotonicity {\cite[Thm.~4.1]{Kal94}}, weak form]\label{thm:kalaiMonotonicity}
Let $d\ge 4$, $P$ a $d$-polytope with a simplicial $2$-skeleton, and $F$ a facet of $P$. Then
\[g_2(P)\ge g_2(F).\]
Equivalently,
$f_1(P)-f_1(F)\ge (df_0(P)-\binom{d+1}{2})-((d-1)f_0(F)-\binom{d}{2})$.
\end{theorem}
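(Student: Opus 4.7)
The plan is to interpret $g_2$ via framework rigidity and then exhibit a linear injection between stress spaces. By the Alexandrov--Whiteley theorem recalled just above, for any polytope $Q$ of dimension at least $3$ with a simplicial $2$-skeleton, $g_2(Q)$ equals the dimension of the stress space of the $1$-skeleton of $Q$, regarded as a framework via the actual vertex positions in $\mathbb{R}^{\dim Q}$. Since $d \ge 4$, this applies both to $P$ inside $\mathbb{R}^d$ and to $F$, which is a $(d-1)$-polytope with simplicial $2$-skeleton (inherited from $P$) lying in an affine hyperplane $H \subset \mathbb{R}^d$.

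The key step is to build a natural ``extension by zero'' map from stresses on $F$ to stresses on $P$. First I would note that, because the edge vectors of $F$ all lie in the linear subspace parallel to $H$, the equilibrium equations of the $1$-skeleton of $F$ computed inside $H$ are identical to those computed in the ambient $\mathbb{R}^d$. Then, given a stress $w$ on the edges of $F$, I would define $w'$ on $E(P)$ by $w'(e) = w(e)$ for $e \in E(F)$ and $w'(e) = 0$ otherwise. Checking equilibrium at each vertex $v$ of $P$ is routine: if $v \in F$, the contributions from edges leaving $F$ vanish, so the sum reduces to that of $w$ at $v$; if $v \notin F$, every incident edge has weight zero. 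Thus $w \mapsto w'$ is a linear injection, forcing $g_2(F) \le g_2(P)$.

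The equivalent numerical inequality is then immediate by substituting the definition $g_2(R) = f_1(R) - (\dim R)\, f_0(R) + \binom{\dim R + 1}{2}$ for $R \in \{P, F\}$. No serious obstacle arises here: the two small points to verify are that edges of $F$ really are edges of $P$ (automatic since $F$ is a face of $P$, hence every edge of $F$ is a $1$-face of $P$) and that equilibrium in $H$ agrees with equilibrium in $\mathbb{R}^d$ (automatic since $H$ is affine and edge vectors of $F$ stay parallel to $H$). The hardest conceptual input is the invocation of the Alexandrov--Whiteley identification of $g_2$ with stress-space dimension, which is quoted as a black box.
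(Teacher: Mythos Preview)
The paper does not supply its own proof of this theorem; it is quoted as a black box from Kalai \cite[Thm.~4.1]{Kal94}, with only the remark that Kalai ``proved [it] using rigidity arguments.'' Your proposal is correct and is precisely the rigidity argument alluded to: once Alexandrov--Whiteley identifies $g_2$ of a polytope (with simplicial $2$-skeleton) with the dimension of the stress space of its $1$-skeleton in its natural convex position, the extension-by-zero map from stresses on $F$ (in its supporting hyperplane $H$) to stresses on $P$ (in $\mathbb{R}^d$) is the obvious injection, and the observation that equilibrium in $H$ coincides with equilibrium in $\mathbb{R}^d$ is exactly what makes this work. So your argument matches what the paper cites rather than replaces it.

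One small point worth making explicit in a polished write-up: the Alexandrov--Whiteley statement gives infinitesimal rigidity of the \emph{actual} convex realization, hence the rigidity matrix has full rank $df_0-\binom{d+1}{2}$ there, and so the stress space at that specific (possibly non-generic) embedding already has dimension exactly $g_2$. You use this for both $P$ and $F$; it is implicit in your first paragraph but could be stated once to forestall any worry about genericity.
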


\section{A lower bound theorem for almost simplicial polytopes}
\label{sec:LBT}

Before proving the lower bound theorem, we give several definitions which we use in the section.

Let $G$ be a proper face of a polytope $Q$. A point $w$ is {\it beyond} $G$ (with respect to $Q$) if
(i) $w$ is not on any hyperplane supporting a facet of $Q$, (ii)
$w$ and the interior of $Q$ lie on different sides of any hyperplane supporting a facet containing $G$, but (iii) on the same side of every other facet-defining hyperplane which does not contain $G$.
For an ASP-pair $(P,F)$ we will consider the simplicial polytope $Q$ obtained as the convex hull of $P$ and a vertex $y$ beyond $F$.

A polytope is {\it $k$-simplicial} if  each $k$-face is a simplex; a $(d-1)$-simplicial $d$-polytope is simply a simplicial $d$-polytope. A simplicial $d$-polytope is called \emph{stacked} if it can be obtained from a $d$-simplex by repeated \emph{stacking}, namely, adding a vertex beyond a facet and taking the convex hull. While stacked $d$-polytopes on $n$ vertices, denoted $S(d,n)$, may have different combinatorial structures, they all have the same $f$-vector, given by \[f_k(S(d,n))=\phi_{k}(d,n):=\begin{cases}
{d\choose k}n-{d+1\choose k+1}k&\text{for $k=1,\ldots,d-2$}\\
(d-1)n-(d+1)(d-2) &\text{for $k=d-1$}.\end{cases}\]

A homology sphere is {\it stacked} if it is combinatorially isomorphic to the boundary complex of a stacked polytope.

For any integers $d\ge 3$, $s\ge 0$ and $n\ge d+s+1$, let $F$ be a stacked $(d-1)$-polytope with $d+s$ vertices.
Construct a pyramid over $F$ and then stack $n-d-s-1$ times over facets of the resulting polytope that are different from $F$ to obtain a polytope $S(d,n,s)$ in $\Pl(d,n,s)$.
One easily computes the $f$-vector of $S(d,n,s)$, since refining $F$ by its (unique) stacked triangulation refines the boundary complex of $S(d,n,s)$ to a stacked simplicial sphere with $f$-vector $f(S(d,n))$. We obtain
\[f(S(d,n,s)) = f(S(d,n)) - (0,0,\cdots,0,s,s)
.\]

We are ready to state the LBT for ASP (\cref{thm:LBT}); its minimizers will be characterized later (see \cref{thm:LBT>4,thm:LBT=4}). In the proof of \cref{thm:LBT} we rely on the MPW reduction, which states that if the result is true for the edges of a simplial polytope then it is true for all faces of all dimensions of the polytope. This reduction is clearly explained in the proof of \cite[Thm.~1]{Bar73}. For almost simplicial polytopes, the same reasoning gives that, if the result is true for edges,  it is true for all faces of  dimensions at most $d-3$. See also \cite[Sec.~5]{Kal87}.

\begin{theorem}[LBT for ASP]\label{thm:LBT}
Let $d\ge 3$, $s\ge 0$, $n\ge d+s+1$. Then for any $P\in \Pl(d,n,s)$ and $1\le i \le d-1$ we have
\[f_i(S(d,n,s))\le f_i(P)
.\]
\end{theorem}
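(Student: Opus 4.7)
The plan is to proceed by strong induction on $d$. When $d = 3$, Euler's formula forces $f(P) = (1, n, 3n-6-s, 2n-4-s) = f(S(3,n,s))$ for every $P \in \Pl(3,n,s)$, and the theorem holds with equality. For $d \geq 4$, the proof splits into a base case $i = 1$ handled by framework rigidity, and an ASP-adaptation of the McMullen--Perles--Walkup (MPW) link recursion for $i \geq 2$.

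For $i = 1$ and $d \geq 4$: since every face of $P$ of dimension at most $d-2$ lies in some simplicial facet of $P$ (namely any facet other than $F$ containing it), it is a simplex, so the $2$-skeleton of $P$ is simplicial. Kalai's Monotonicity (\cref{thm:kalaiMonotonicity}) then yields $g_2(P) \geq g_2(F)$, and the classical LBT applied to the simplicial $(d-1)$-polytope $F$ gives $g_2(F) \geq 0$. Hence $f_1(P) \geq dn - \binom{d+1}{2} = f_1(S(d,n,s))$.

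For $2 \leq i \leq d-1$, I would apply the identity $(i+1)\,f_i(P') = \sum_v f_{i-1}(\lk_{P'}(v))$ to the simplicial $(d-1)$-ball $P'$ and split the sum according to whether $v \in F$. For $v \notin F$, the link $\lk_{P'}(v) = \partial(P/v)$ is the boundary complex of the simplicial vertex figure $P/v$, so classical LBT in dimension $d-1$ gives $f_{i-1}(\lk_{P'}(v)) \geq \phi_{i-1}(d-1,\deg_P(v))$. For $v \in F$, the vertex figure $P/v$ has $F/v$ as its unique non-simplex facet (the other facets of $P/v$ arise from simplex facets of $P$ containing $v$), so $(P/v, F/v)$ is an ASP-pair in dimension $d-1$ with $\deg_P(v)$ vertices and parameter $s_v := \deg_F(v) - (d-1) \geq 0$; a dimension count at $v$ guarantees $\deg_P(v) \geq \deg_F(v)+1$, so the inductive hypothesis is applicable. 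Moreover $\lk_{P'}(v) = (P/v)'$, so the inductive LBT provides a lower bound on $f_{i-1}(\lk_{P'}(v))$ in terms of $f_{i-1}(S(d-1, \deg_P(v), s_v))$.

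Summing, the dominant term is $\sum_v \phi_{i-1}(d-1, \deg_P(v))$, which is linear in $\sum_v \deg_P(v) = 2 f_1(P)$ and hence bounded below by the base case. The ASP correction $-\sum_{v\in F} s_v$ enters only in degrees $i-1 \in \{d-3, d-2\}$ and is controlled by $\sum_{v\in F} s_v = 2 f_1(F) - (d-1)(d+s) \geq (d-1) s$ (classical LBT for $F$), with equality precisely when $F$ is stacked. The hard part will be the final arithmetic: verifying that these aggregated link bounds combine to exactly $(i+1)f_i(S(d,n,s))$ for every $i$, with the shift $f_{d-1}(P') = f_{d-1}(P) - 1$ accounted for at $i = d-1$. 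The extremal values are saturated at each step by the stacked pyramid construction underlying $S(d,n,s)$, which is what makes the arithmetic match.
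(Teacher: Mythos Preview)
Your overall architecture (induction on $d$, the $i=1$ case via rigidity, MPW-type double counting for $i\ge 2$) matches the paper's, and for $2\le i\le d-3$ your link recursion is exactly what the paper does. The gap is in the top two cases $i\in\{d-2,d-1\}$, where your proposed control of the ASP correction has the inequality pointing the wrong way.

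Concretely, summing the inductive bounds on the links gives
\[
d\,f_{d-1}(P') \;\ge\; 2(d-2)f_1(P) - d(d-3)n - \sum_{v\in F} s_v - (d+s),
\]
and you propose to bound the dominant term using only $f_1(P)\ge dn-\binom{d+1}{2}$ and to ``control'' the correction via $\sum_{v\in F} s_v = 2f_1(F)-(d-1)(d+s)\ge (d-1)s$. But the correction enters with a minus sign, so you need an \emph{upper} bound on $\sum_v s_v$, not a lower bound. If you plug in only $f_1(P)\ge dn-\binom{d+1}{2}$ and simplify, the desired inequality $f_{d-1}(P)\ge \phi_{d-1}(d,n)-s$ becomes equivalent to $\sum_{v\in F} s_v \le (d-1)s$, which is exactly the reverse of what the classical LBT for $F$ gives (and is false whenever $F$ is not stacked). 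The same obstruction appears at $i=d-2$.

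The missing idea is to use the full strength of Kalai's monotonicity $g_2(P)\ge g_2(F)$, not just its consequence $g_2(P)\ge 0$. Writing $\sum_{v\in F} s_v = 2g_2(F)+(d-1)s$, monotonicity gives $f_1(P)\ge dn-\binom{d+1}{2}+g_2(F)$, so the $2(d-2)f_1(P)$ term carries an extra $2(d-2)g_2(F)$ that absorbs the $-2g_2(F)$ in the correction, leaving a nonnegative $2(d-3)g_2(F)$. This is precisely how the paper closes the estimate for $i=d-1$; it then deduces $i=d-2$ from $i=d-1$ by counting ridge--facet incidences together with the classical LBT lower bound on $f_{d-2}(F)$.
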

\begin{proof}
We proceed by induction on $d$, with the case $d=3$ as the basis. For $d=3$ and $n\ge s+4$, any $P\in \Pl(3,n,s)$ has $f$-vector $f(P)=(1,n,3n-6-s,2n-4-s)=f(S(3,n,s))$. Let $d\ge 4$.

As $P$ is $2$-simplicial,
by a result of Whiteley~\cite[Thm. 8.6]{Whi84}, the $1$-skeleton of $P$ is generically $d$-rigid, hence $f_1(P)\ge \phi_{1}(d,n)$, and by the MPW reduction (as usual, counting pairs $(v,A)$ such that $v$ is a vertex in an $i$-face $A$), $f_i(P)\ge \phi_{i}(d,n)$ for all $2\le i\le d-3$ as well; see \cite[Thm.~12.2]{Kal87}
\footnote{Kalai's theorem contains a typo.  It includes the case $i=k$, while it holds only for $i<k$, where $P$ is $k$-simplicial. Our ASP $P$ is $(d-2)$-simplicial.}. The rest of the proof will deal with the cases $i=d-2,d-1$.

Denote by $(P,F)$ the ASP-pair, and by $\deg_P(v)$ the degree of a vertex $v$ in the $1$-skeleton of $P$. We now prove the inequality for the facets, by a variation of the MPW reduction.
Note that the vertex figure  $P/v$ in $P$ of any vertex $v\in \ver F$ is an ASP (with $\deg_P(v)$ vertices), while for any vertex $v\in \ver P\setminus \ver F$, $P/v$ is a simplicial polytope; cf. \cite[Thm.~11.5]{Bro83}. Furthermore, for a vertex $v\in \ver F$, letting $s_v:=\deg_F(v)-(d-1)\ge 0$ gives
$P/v \in \Pl(d-1,\deg_P(v),s_v)$.

Double counting the number of pairs $(v,A)$ for a vertex $v$ in a facet $A$ of $P$, we obtain the following inequalities:
\begin{multline*}
d(f_{d-1}(P)-1) + (d+s) = \sum_{v\in \ver P}f_{d-2}(\lk_P(v))\\
\ge
\sum_{v\in \ver P \setminus \ver F}((d-2)\deg_P(v) - d(d-3))
+ \sum_{v\in \ver F}((d-2)\deg_P(v) - d(d-3)-s_v)\\
= 2(d-2)f_1(P) - d(d-3)f_0(P) - 2f_1(F) + (d-1)(d+s)\\
\ge
2(d-2)\left[d f_0(P)-\binom{d+1}{2}\right] - d(d-3)f_0(P) - 2\left[(d-1)f_0(F)-\binom{d}{2}\right] +(d-1)(d+s)\\
= d(d-1)f_0(P) - d(d+1)(d-2) - s(d-1),
\end{multline*}
where the first inequality is by the induction hypothesis and the second inequality is by
Kalai's monotonicity Theorem~\ref{thm:kalaiMonotonicity}
 and the LBT inequality for $f_1(P)$.
Comparing the LHS with the RHS gives
\[f_{d-1}(P)\ge \phi_{d-1}(d,n)-s.\]
The inequality for $f_{d-2}(P)$ follows from the inequality for $f_{d-1}(P)$ by double counting.
Since any ridge  in $P$ is contained in exactly two facets, counting the number of pairs $(R,A)$ for a ridge $R$ in a facet $A$ of $P$,  we obtain that
\[2f_{d-2}(P)=d(f_{d-1}(P)-1)+f_{d-2}(F).\]
Applying the classical LBT to the simplicial $(d-1)$-polytope $F$ with $f_0(F)=d+s$, we get
\[2f_{d-2}(P)\ge d(f_{d-1}(P)-1)+ (d-2)(d+s) - d(d-3),\]
and applying the lower bound for $f_{d-1}(P)$ yields, after dividing both sides by $2$,
the desired lower bound $f_{d-2}(P)\ge \phi_{d-2}(d,n)-s$.
\end{proof}

We now turn our attention to characterizing the minimizers of Theorem~\ref{thm:LBT}. We start with some terminology and background.

A proper subset $A$ of the vertices of a $d$-polytope $P$ is called a \emph{missing $k$-face} of $P$ if the cardinality of $A$ is $k+1$, the simplex on $A$ is not a face of $P$, but for any proper subset $B$ of $A$ the simplex on $B$ is a face of $P$. If $A$ is a missing $(d-1)$-face of $P$ (a.k.a. \emph{missing facet}) then adding the simplex $A$ cuts $P$ into two $d$-polytopes $P_1, P_2$, glued along the simplex $A$. We denote this operation by $P=P_1\# P_2$. Repeating this procedure on each $P_i$ until no piece $P_i$ contains a missing $(d-1)$-face results in a decomposition
$P=P_1\# P_2\#\cdots\# P_t$,
where intersections along missing $(d-1)$-faces of $P$ define the edges of a tree whose vertices are the $P_i$'s.
Call such a decomposition the \emph{prime decomposition} of $P$, and call each $P_i$ a \emph{prime factor} of $P$. See \cite[Sec.~3.8]{Kal94}. For $d\ge 3$ a prime decomposition of $P$ as above is uniquely defined; this statement follows from the following simple observation.

\begin{lemma} \label{lem:uniqueness-decomposition}
Let $P$ be a $d$-polytope, $d\geq 3$. 
Then the intersection of any two missing facets of $P$ contains no interior point of $P$.
\end{lemma}
\begin{proof}
Let $A$ and $B$ be two missing facets of a $P$ and assume by contradiction that $v$ is an interior point in the intersection of $A$ and $B$. Denote by $H_A$ (resp. $H_B$) the unique hyperplane containing $A$ (resp. $B$). As $d\ge 3$ the intersection of $H_A$ and $H_B$ contains a line $\ell$ through $v$. Consider the intersection $\ell\cap A\cap B$ and denote this segment by $[u,w]$. Then $v$ is interior to $[u,w]$. If $u$ is not on the boundary $\partial A$ of $A$, it must be on $\partial B$; but the first condition says that $u$ is interior to $P$ while the second condition says that $u$ is in $\partial P$, a contradiction. Thus, both $u$ and $w$ are in $\partial A \cap\partial B$.

Let $F_u$ be the minimal face of $\partial P$ containing $u$, and define $F_w$ similarly. Both $F_u$ and $F_w$ are proper faces of both $A$ and $B$. As $v$ is interior to $A$, $\ver F_u \cup \ver F_w = \ver A$. But then $\ver A\subseteq \ver B$, implying that $A=B$, a final contradiction.
\end{proof}

By  virtue of \cref{lem:uniqueness-decomposition}, for $d\ge 3$, we denote by $\Delta_P$ the polyhedral complex defined by {\it the} prime decomposition of a $d$-polytope $P$. Then a simplicial $d$-polytope $P$ is stacked if and only if all its prime factors are $d$-simplices. This definition immediately extends to polyhedral spheres where the operation $\#$ corresponds to the topological connected sum.

We start with the characterization of the minimizers for the easier case $d>4$.

\begin{theorem}[Characterization of minimizers for $d>4$]\label{thm:LBT>4}
Let $d>4$ and $P\in \Pl(d,n,s)$. Let $\Delta_F$ be the polyhedral complex corresponding to the prime decomposition of $F$, and let $\Delta$ be the refinement of the boundary complex $\partial P$ of $P$ obtained by refining $F$ by $\Delta_F$. Assume there is some $1\le i\le d-1$ for which $f_i(P)=f_i(S(d,n,s))$.
Then, all prime factors in the prime decomposition of $\Delta$ are $d$-simplices.
In particular, $f(P)=f(S(d,n,s))$.
\end{theorem}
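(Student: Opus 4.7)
The plan is to reduce the equality hypothesis to the case $f_1(P) = \phi_1(d,n)$, equivalently $g_2(P) = 0$, and then to apply Kalai's classical characterization of minimizers in the LBT twice: first to force $F$ to be a stacked $(d-1)$-polytope so that $\Delta$ becomes a simplicial sphere, and second to force $\Delta$ itself to be a stacked simplicial $(d-1)$-sphere.

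For the reduction I would argue by cases on $i$. The case $i = 1$ is immediate. For $i = d-1$, I would trace the equality through the chain of inequalities in the proof of \cref{thm:LBT}: tightness in $f_{d-1}(P) = \phi_{d-1}(d,n) - s$ forces simultaneous equality in (i) the inductive LBT for each ASP vertex figure $P/v$ with $v \in \ver F$, (ii) the classical LBT for each simplicial vertex figure $P/v$ with $v \notin \ver F$, (iii) Kalai's monotonicity $g_2(P) \ge g_2(F)$, and (iv) the rigidity bound $f_1(P) \ge \phi_1(d,n)$; the last of these is the target. For $i = d-2$, the ridge-counting identity $2 f_{d-2}(P) = d(f_{d-1}(P) - 1) + f_{d-2}(F)$, combined with the classical LBT minimizer characterization applied to the simplicial polytope $F$, reduces the case to $i = d-1$. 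For $2 \le i \le d-3$, the MPW-style double counting $(i+1) f_i(P) = \sum_{v \in \ver P} f_{i-1}(\lk_{P'}(v))$ bounds $f_i(P)$ from below by summing LBT estimates over each vertex link; equality forces tightness in every link, and a recursive application (invoking the LBT minimizer characterization for the simplicial polytope links at vertices $v \notin F$, and the inductive ASP version at vertices $v \in F$) propagates equality down to $f_1(P) = \phi_1(d,n)$.

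With $g_2(P) = 0$ in hand, Kalai's monotonicity \cref{thm:kalaiMonotonicity} together with $g_2(F) \ge 0$ (valid since $F$ is a simplicial $(d-1)$-polytope and $d-1 \ge 4$) yields $g_2(F) = 0$, and Kalai's classical minimizer characterization then forces $F$ to be a stacked $(d-1)$-polytope. Consequently $\Delta_F$ triangulates $F$ into $(d-1)$-simplices, and the refinement $\Delta$ is a simplicial $(d-1)$-sphere. Because $\Delta_F$ only inserts faces of dimension at least $d-2 \ge 3$, the $1$-skeleton of $\Delta$ coincides with that of $P$, so $g_2(\Delta) = g_2(P) = 0$. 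A second application of Kalai's characterization, now to the simplicial $(d-1)$-sphere $\Delta$ with $d - 1 \ge 4$, forces $\Delta$ to be a stacked simplicial sphere; that is, all its prime factors are $d$-simplices, as required. The ``in particular'' claim follows from a short $f$-vector comparison: the stacked sphere $\Delta$ on $n$ vertices has $f(\Delta) = f(S(d,n))$, while $\Delta$ differs from $\partial P$ only by the $s$ extra $(d-1)$-simplices and $s$ extra $(d-2)$-simplices introduced inside $F$ by $\Delta_F$, so $f(P) = f(S(d,n)) - (0,\ldots,0,s,s) = f(S(d,n,s))$.

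The main obstacle is the equality-propagation argument in the middle range $2 \le i \le d-3$, which demands careful bookkeeping of the two different types of vertex links arising in an ASP and the correct minimizer theorem to invoke for each. A secondary point is that the hypothesis $d > 4$ is essential precisely for the step ``$g_2(F) = 0$ implies $F$ is stacked'': in dimension $d = 4$ the face $F$ is a $3$-polytope for which $g_2(F) = 0$ holds automatically, so a separate argument is needed, carried out in \cref{thm:LBT=4}.
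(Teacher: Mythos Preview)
Your proposal is correct and follows essentially the same route as the paper: reduce equality at some $f_i$ to $g_2(P)=0$ via the MPW-type inequalities from the proof of \cref{thm:LBT}, then use Kalai's monotonicity to force $g_2(F)=0$, apply Kalai's characterization to make $F$ stacked (here is where $d>4$ is used), and apply it once more to the resulting simplicial sphere $\Delta$. Your treatment is in fact more explicit than the paper's---you spell out the case analysis for the reduction and justify $g_2(\Delta)=g_2(P)$ via the shared $1$-skeleton---though note that for $2\le i\le d-3$ the MPW chain already passes through $f_1(P)$ directly, so invoking minimizer characterizations in the links is unnecessary for that step.
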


\begin{remark}\label{rmk:H-stacking}
Let $Q$ be a polytope, $G$ a facet of $Q$ and $H$ the hyperplane containing $G$. An \emph{$H$-stacking} on $Q$ is the operation of (i) adding a new vertex $w$ in $H$, beyond a facet of $G$ (with respect to $G$)
such that perturbing $w$ from $H$ to the side of the interior of $Q$ makes $w$ beyond a facet of $Q$, and (ii) taking the convex hull of $w$ and $Q$.
The minimizers considered in Theorem \ref{thm:LBT>4} are precisely the polytopes that can be obtained by the following recursive procedure: start with a $d$-simplex having a facet in a hyperplane $H$, and repeatedly either $H$-stack or (usual) stack over a facet not in $H$.

Clearly this procedure produces a prime decomposition of $\Delta$ of Theorem \ref{thm:LBT>4}. Conversely, consider the rooted tree corresponding to the prime decomposition of $\Delta$. Starting with the root, add vertices one by one so that the resulting induced forest is always a tree. Such ordering induces a recursive procedure as above.
\end{remark}

\begin{proof}[Proof of Theorem~\ref{thm:LBT>4}]
By the classical MPW reduction for $1\leq i\leq d-3$ and the variation of it we used in the proof of Theorem~\ref{thm:LBT} for $d-2\leq i\leq d-1$, equality for some $1\leq i\leq d-1$ implies equality for $i=1$, so
it is enough to consider the case $i=1$.
From Kalai's monotonicity (Theorem~\ref{thm:kalaiMonotonicity}) and our assumption $g_2(P)=0$, it follows that $g_2(F)=0$.
As $F$ is simplicial of dimension $\ge 4$, Kalai's \cite[Thm.~1.1(ii)]{Kal87}
says that $F$ is stacked, thus $\Delta$ is a simplicial $(d-1)$-sphere. Since $g_2(\Delta)=0$, by \cite[Thm.~1.1(ii)]{Kal87} again, $\Delta$ is stacked, as desired.

In particular, as $f(P)=f(\Delta)-(0,\ldots,0,s,s)=f(S(d,n))-(0,\ldots,0,s,s)$ we conclude that
$f(P)=f(S(d,n,s))$.
\end{proof}

For $d=4$, $F$ need not be stacked. For example, the pyramid over any simplicial $3$-polytope is a minimizer. We obtain the following characterization of minimizers.

\begin{theorem}[Characterization of minimizers for $d=4$]\label{thm:LBT=4}
Let $P\in \Pl(4,n,s)$,
and keep the notation of Theorem~\ref{thm:LBT>4}. Assume there is some $1\le i\le d-1$ for which $f_i(P)=f_i(S(d,n,s))$.
Then, the prime factors in the prime decomposition of $\Delta$ are either $d$-simplices with no facet contained in $|F|$, or pyramids over prime factors of $F$.
\end{theorem}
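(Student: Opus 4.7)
The plan mirrors that of \cref{thm:LBT>4}: first reduce all equality cases to the single condition $g_2(P)=0$ via MPW, then analyze the combinatorial structure of $\Delta$ inductively. By the MPW argument (classical for $1\le i\le d-3=1$, and the variant for $i\in\{d-2,d-1\}=\{2,3\}$ used in the proof of \cref{thm:LBT}), equality $f_i(P)=f_i(S(d,n,s))$ for some $i$ forces equality at every step of the chain of inequalities; in particular the rigidity bound $f_1(P)\ge df_0(P)-\binom{d+1}{2}$ must be tight, so $g_2(P)=0$. Unlike the case $d>4$, Kalai's monotonicity $g_2(P)\ge g_2(F)$ is uninformative here since $g_2$ vanishes for any simplicial $3$-polytope $F$, so one cannot deduce stackedness of $F$ and the combinatorial type of $F$ remains unrestricted.

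Assuming $g_2(P)=0$, I would argue by induction on $n$. In the base case $n=d+s+1$, there is a unique vertex $y_0\notin\ver F$ and $P=\pyr_{y_0}(F)$. The $3$-cells of $\Delta$ are the prime factors $F_1,\ldots,F_m$ of $F$ together with the tetrahedra $\{y_0\}*G$ for $G$ a facet of $F$. For each missing $2$-face $T=\{t_1,t_2,t_3\}$ of $F$ separating factors $F_j$ and $F_{j'}$, a direct check shows that $\{y_0\}\cup T$ is a missing $3$-simplex of $\Delta$: its four triangular sub-faces are $T$ (a $2$-cell of $\Delta$, common to $F_j$ and $F_{j'}$) and the three triangles $\{y_0,t_i,t_j\}$ (each a $2$-face of some pyramid tetrahedron $\{y_0\}*G$), while $\{y_0\}\cup T$ itself is neither an $F_k$ nor of the form $\{y_0\}*G$. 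Inserting one such missing $3$-simplex per missing $2$-face of $F$ decomposes $\Delta$ precisely into $\partial\pyr(F_1)\#\cdots\#\partial\pyr(F_m)$, as required.

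For the inductive step $n\ge d+s+2$, I aim to locate a missing $3$-simplex $B$ of $\Delta$ whose insertion peels off a $4$-simplex prime piece $\partial\conv(B\cup\{v\})$ with $v\in\ver P\setminus\ver F$ and $\lk_P(v)=\partial B$. Such a vertex $v$ has degree $d$ in $P$ and its four neighbors span a $K_4$; the split then leaves, on the other side of $B$, the refinement $\Delta'$ of $\partial P'$ for the smaller polytope $P'\in\Pl(d,n-1,s)$ obtained by removing $v$. Additivity of $g_2$ under prime connect-sum, combined with $g_2(\Delta^d)=0$, yields $g_2(P')=g_2(P)=0$, so the inductive hypothesis applies to $P'$ and delivers the prime decomposition of $\Delta'$; adjoining the split-off $4$-simplex (which has no facet in $|F|$ because $v\notin\ver F$) produces the claimed prime decomposition of $\Delta$.

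The main obstacle is to establish existence of such a ``stacked'' vertex $v\notin\ver F$ whenever $n>d+s+1$. This is the ASP analogue of Kalai's key result \cite[Thm.~1.1]{Kal87} that every simplicial $d$-polytope with $d\ge 4$ and $g_2=0$ admits a vertex of degree $d$ whose link is a $(d-1)$-simplex. The adaptation would rely on the generic $d$-rigidity of the $1$-skeleton of $P$ (Whiteley's theorem for $2$-simplicial polytopes), which together with $g_2(P)=0$ makes the $1$-skeleton minimally $d$-rigid (no non-trivial stresses). A rigidity-based analysis analogous to Kalai's, carefully accounting for the special role of the non-simplex facet $F$ (whose missing $2$-faces contribute additional local constraints around $\ver F$), should yield the required stacked vertex outside $\ver F$, the only obstruction being the base case $P=\pyr_{y_0}(F)$ already handled.
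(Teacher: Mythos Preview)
Your reduction to $g_2(P)=0$ and the base-case analysis of $P=\pyr_{y_0}(F)$ are fine (though you should also verify that each $\pyr_{y_0}(F_j)$ is itself prime, i.e., has no further missing $3$-faces). The real problem is the inductive step: the entire argument rests on producing a vertex $v\notin\ver F$ with $\lk_P(v)=\partial B$ for a $3$-simplex $B$, and you do not prove this. Saying that ``a rigidity-based analysis analogous to Kalai's \ldots\ should yield'' such a vertex is not a proof; it is precisely the hard part. Kalai's classification \cite[Thm.~1.1]{Kal87} for simplicial polytopes does not proceed by first locating a degree-$d$ vertex to peel off, and there is no off-the-shelf lemma that hands you one in the ASP setting. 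Even granting such a $v$, you would still need to check that $\conv(\ver P\setminus\{v\})$ is again in $\Pl(4,n-1,s)$ with $B$ as its new simplex facet, so that the inductive hypothesis applies; this is plausible but not addressed.

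The paper takes a different route that avoids this difficulty entirely. Rather than inducting on $n$, it fixes a prime factor $L$ of $\Delta$ and shows directly that $g_2(L)=0$ forces $L$ to be a $4$-simplex or a pyramid over some $F_j$. The key new tool is \cref{lem:rigidity_C_d}: the $1$-skeleton of any homology $3$-ball in the class $\C_3$ (in particular, one whose boundary is induced) is generically $4$-rigid. Assuming $L$ has a facet $F''\subset|F|$ and two vertices $u,v\notin F''$, one first checks that the edge $uv$ satisfies the link condition (else a missing triangle through $uv$ would create a $4$-stress), so contracting $uv$ stays in $\C_3$; rigidity then forces $|\lk_L(uv)|=3$. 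If the triangle $xyz=\lk_L(uv)$ is a face, $L$ is the $4$-simplex; if not, a bistellar move (replacing $uv*\partial(xyz)$ by $\partial(uv)*xyz$) produces another complex in $\C_3$ whose rigidity, together with the surviving edge $uv$, yields a nontrivial $4$-stress in $L$---contradicting $g_2(L)=0$. None of these ingredients appears in your outline, and they are what actually does the work in dimension $4$.
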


In order to prove this theorem we first need to show generic $d$-rigidity for the $1$-skeleton of a much larger class of complexes.

Let $\C_{k}$ be the family of homology $k$-balls $\Delta$ such that:
\begin{itemize}
\item the induced subcomplex $\Delta[I]$ on the set $I$ of internal vertices has a connected $1$-skeleton, and
\item for any edge $e$ in the boundary complex $\partial\Delta$, there exists a $2$-simplex $T$, $e\subset T$, such that $T$ has a vertex in $I$.
\end{itemize}
Note that any homology $k$-ball $\Delta$ whose boundary $\partial\Delta$ is an induced subcomplex is in $\C_k$\footnote{This follows from Alexander duality, however we preferred to give an easier argument.}.
Indeed, as $\partial\Delta$ is induced, any facet of $\Delta$ intersects $I$ nontrivially. Assume by contradiction that the graph $\Delta[I]_{\leq 1}$ is disconnected, say equals the disjoint union of nontrivial graphs $G_1$ and $G_2$. As $\Delta$ is facet-connected it has facets $F_1$ and $F_2$ whose intersection $S$ has codimension $1$ and $F_i$ has a vertex in $G_i$ for $i=1,2$. Then $S$ is disjoint from $I$, so $S\in \partial\Delta$. This is a contradiction as $S$ is contained in two facets of $\Delta$, not in one.

In particular,
for $P\in \Pl(d,n,s)$,
the simplicial complex $P'=\partial P\setminus\{F\}$ is in $\C_{d-1}$.

\begin{lemma}\label{lem:rigidity_C_d}
Let $d\ge 4$.  The $1$-skeleton of any $\Delta\in \C_{d-1}$ is generically $d$-rigid. Thus, $f_1(\Delta)\ge df_0(\Delta) - \binom{d+1}{2}$.
\end{lemma}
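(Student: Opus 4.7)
The plan is to realize $\Delta_{\le 1}$ as a union of generically $d$-rigid subgraphs, one per internal vertex, and then apply a vertex-gluing argument. The three main ingredients are: (a) a cone-plus-link argument producing generically $d$-rigid building blocks, (b) the connectivity hypothesis on $\Delta[I]_{\le 1}$ to chain these blocks together, and (c) the second bullet in the definition of $\C_{d-1}$ to ensure that the resulting union covers every boundary vertex.

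For (a), fix $v\in I$. Since $\Delta$ is a homology $(d-1)$-ball and $v$ is internal, $\lk_\Delta(v)$ is a homology $(d-2)$-sphere, of dimension $\ge 2$ because $d\ge 4$. By the extension of Kalai's rigidity theorem from simplicial polytope boundaries to arbitrary simplicial homology spheres of dimension $\ge 2$ (Fogelsanger's theorem), the $1$-skeleton of $\lk_\Delta(v)$ is generically $(d-1)$-rigid. Since $\st_\Delta(v)=v*\lk_\Delta(v)$, its $1$-skeleton is the cone over the $1$-skeleton of $\lk_\Delta(v)$, and Whiteley's cone lemma~\cite{Whi84} upgrades this to generic $d$-rigidity of the $1$-skeleton of $\st_\Delta(v)$.

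For (b), suppose $u,v\in I$ are adjacent in $\Delta[I]$. The $1$-skeletons of $\st_\Delta(u)$ and $\st_\Delta(v)$ both contain the $1$-skeleton of $\st_\Delta(\{u,v\})$, whose vertex set is $\{u,v\}\cup \ver(\lk_\Delta(\{u,v\}))$. The link $\lk_\Delta(\{u,v\})$ is a homology $(d-3)$-sphere, hence has at least $(d-3)+2=d-1$ vertices, so the two star-graphs share at least $d+1\ge d$ vertices. Using connectivity of $\Delta[I]_{\le 1}$, order the internal vertices $v_1,\ldots,v_m$ so that each $v_i$ with $i\ge 2$ is adjacent in $\Delta[I]$ to some $v_j$ with $j<i$, and iterate the vertex gluing lemma---two generically $d$-rigid graphs sharing at least $d$ vertices glue to a generically $d$-rigid graph. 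This shows that the subgraph $G$ of $\Delta_{\le 1}$ obtained as the union of the $1$-skeletons of the stars $\st_\Delta(v)$ for $v\in I$ is generically $d$-rigid.

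For (c), note that for $d\ge 4$, $\partial\Delta$ is a homology sphere of dimension $d-2\ge 2$, so every boundary vertex $b$ lies on at least one edge $e$ of $\partial\Delta$. By the second defining property of $\C_{d-1}$, the edge $e$ is contained in some $2$-simplex whose third vertex lies in $I$, hence $b$ is adjacent to an internal vertex and belongs to $G$. Thus $G$ is a spanning subgraph of $\Delta_{\le 1}$, and since adjoining edges preserves generic rigidity, $\Delta_{\le 1}$ itself is generically $d$-rigid; the edge bound $f_1(\Delta)\ge df_0(\Delta)-\binom{d+1}{2}$ then follows from the standard characterization of generically $d$-rigid graphs. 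The main obstacle is step (a): one needs generic $(d-1)$-rigidity for arbitrary homology $(d-2)$-spheres, not only for polytope boundaries, which is a substantially deeper input than the cone and gluing lemmas driving the remaining steps.
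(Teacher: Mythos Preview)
Your proof is correct and follows essentially the same approach as the paper: the paper's one-line proof simply invokes \cite[Prop.~6.4]{Kal87} with $T$ a spanning tree of $\Delta[I]_{\le 1}$, and the three steps (a)--(c) you carry out are precisely the content of that proposition in this setting. The only minor difference is attribution---you cite Fogelsanger for generic $(d-1)$-rigidity of homology $(d-2)$-spheres, whereas the paper stays within Kalai's framework, which already establishes this fact.
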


\begin{proof}
The proof follows from Kalai's proof of the classical LBT. Apply \cite[Prop. 6.4]{Kal87} with the tree $T$ there be a spanning tree of the $1$-skeleton of $\Delta[I]$.
\end{proof}

\begin{proof}[Proof of Theorem~\ref{thm:LBT=4}]
Consider a prime factor $L$ of $\Delta$.
Then $L$ is a $4$-polytope
which is $2$-simplicial so it has a generically $4$-rigid $1$-skeleton by \cite[Thm. 8.6]{Whi84}.
As $g_2(P)=0$, the $1$-skeleton of $L$, denoted by $G$, must be generically $4$-stress free. Thus, $g_2(L)=0$.

If $L$ does not contain a facet in $\Delta_F$, then $L$ is simplicial, with $g_2(L)=0$, hence is stacked by \cite[Thm.~1.1]{Kal87}. Being also prime, $L$ is a $4$-simplex.

Assume then that $L$ contains a facet $F''$ contained in $|F|$, so $(L,F'')$ is an ASP-pair. If $L$ has a unique vertex outside $F''$, then $L$ is a pyramid over a prime factor of $F$ and we are done. Assume the contrary, so there is an edge $vu\in G$ with $v,u\notin F''$ (for concreteness, taking $v,u$ to be the highest two vertices of $L$ above the hyperplane of $F$ works).

First we show that $vu$ satisfies the link condition $\lk_L(v)\cap \lk_L(u) = \lk_L(vu)$, which guarantees that contracting the edge $vu$ in the simplicial complex $\partial L\setminus\{F''\}$ results in $\tilde{\Delta}\in \C_3$; see e.g.\cite[Prop.2.4]{Nevo-Novinsky}\footnote{To apply \cite[Prop.~2.4]{Nevo-Novinsky}, phrased for homology spheres, simply cone the boundary of the homology ball $\Delta$ to obtain a homology sphere.}.
Indeed, if $vu$ fails the link condition it means that $vu$ is contained in a missing face $M$, with $3$ or $4$ vertices. Now, $M$ cannot have $4$ vertices as $L$ is prime. If $M=vuz$ then $uz$ is an edge of $L$ \emph{not} in $\lk_L(v)$. Since $\lk_L(v)$ is a homology $2$-sphere (thus, a simplicial $2$-sphere), its $1$-skeleton is generically $3$-rigid. Consequently, the $1$-skeleton of $\st_L(v)$ is
generically $4$-rigid, and adding $uz$ to it yields a $4$-stress in $G$, a contradiction.

Let $m$ be the number of vertices in the cycle $\lk_L(vu)$, then
$f_1(\tilde{\Delta})=f_1(L)-m-1$ and $f_0(\tilde{\Delta})=f_0(L)-1$, which implies that $g_2(L)=g_2(\tilde{\Delta})+(m-3)$.

If $m>3$, then applying Lemma~\ref{lem:rigidity_C_d} to $\tilde{\Delta}$ yields $g_2(L)>0$, a contradiction. So assume $m=3$.

Denote by $x,y,z$ the vertices of $\lk_L(vu)$.
If the triangle $xyz\in L$, then, as $L$ is prime, both tetrahedra $xyzv, xyzu$ are faces of $L$, so $L$ is the $4$-simplex $xyzuv$, a contradiction (as it has a facet $F''$ in $F$).

We are left to consider the case $xyz\notin L$.
The argument here is inspired by Barnette~\cite[Thm.~2]{Bar71}.
In this case, the $3$-ball formed by the join $vu * \partial(xyz)$ is an induced subcomplex of $\partial L\setminus\{F''\}$. Now replace it by $\partial(vu) * xyz$ (this is a bistellar move) to obtain from $\partial L\setminus\{F''\}$ the complex $\Delta"$.
Clearly $\Delta"$ is a homology $3$-ball, and any edge on its boundary is part of a $2$-simplex with an internal vertex (just take the same one as in $\partial L\setminus\{F''\}$). To show $\Delta"\in\C_3$ we are left to show that the graph on the internal vertices $I$ of $\Delta"$ is connected.
Assume not, namely removing the edge $uv$ disconnects the induced graph on $I$ in $\partial L\setminus\{F''\}$. In particular, $x,y,z\in F''$. But $xyz\notin L$, so $xyz$ is a missing face of $F''$, contradicting that $F''$ is a prime factor of $F$.

We conclude that $\Delta"\in C_3$, thus, by Lemma~\ref{lem:rigidity_C_d}, $\Delta"\cup\{vu\}$ has a nonzero $4$-stress. However, the $1$-skeletons of $\Delta"\cup\{vu\}$ and of $L$ are equal graphs
 so $g_2(L)>0$, a contradiction.
The proof is then complete.
\end{proof}

\section{An upper bound theorem for almost simplicial polytopes}
\label{sec:UBT}

Throughout this section, we let $P\in \Pl(d,n,s)$ denote an almost simplicial polytope, $(P,F)$ the ASP-pair, and $P'=\partial P \setminus \{F\}$ the corresponding shellable simplicial $(d-1)$-ball. Recall that $\partial P'=\partial F$ is an induced subcomplex of $P'$.

\subsection{ASP generalization of cyclic polytopes}
\label{subsec:almostCycPol}

The {\it moment curve} in $\mathbb{R}^d$ is defined by $t\mapsto(t,t^2,\ldots,t^d)$ for $t\in \mathbb{R}^d$, and the convex hull of any $n$ points on it gives, combinatorially, the cyclic polytope $C(d,n)$; see, for instance, \cite[Example 0.6]{Zie95}. We extend this construction by considering curves $x(t)$ of the form $(t,t^2,\ldots,t^{d-r},p_1(t),\ldots,p_r(t))$, where $p_i(t)$ is a continuous function in $t$ for $i=1,\ldots,r$.
Later, a special choice of the curve $x(t)$ and points on it will give, by taking the convex hull, our maximizer polytope $C(d,n,s)$.

We let $V(t_1,\ldots,t_{l})$ denote the {\it Vandermonde determinant} on variables $t_1,\ldots,t_l$.
\[V(t_1,\ldots,t_{l}):=\begin{vmatrix}
1 &1&\cdots &1\\
t_1&t_2&\cdots &t_{l}\\
t_1^2&t_2^2&\cdots &t_{d}^l\\
\vdots&\vdots&\cdots&\vdots\\
t_1^{l-1}&t_2^{l-1}&\cdots &t_{l}^{l-1}
\end{vmatrix}
= \prod_{1\le i<j\le l}(t_j-t_i).
\]

Recall a  polytope is {\it $k$-neighborly} if each subset of at most $k$ vertices forms the vertex set of a face. A $\floor{d/2}$-neighborly $d$-polytope is simply called  {\it neighborly}.

\begin{lemma}\label{lem:neighborly}
Consider the curve $x(t)$. Then the following holds:
\begin{enumerate}
\item Any $d-r+1$ points on the curve $x(t)$ are affinely independent.
\item For any $n$ distinct numbers $t_1,\ldots,t_n$, the polytope $Q=\conv (\{x(t_1),\ldots,x(t_{n})\})$ is $(d-r-1)$-simplicial.
\item The polytope $Q$ is $\floor{(d-r)/2}$-neighborly.
\end{enumerate}
\end{lemma}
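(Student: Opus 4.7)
The plan is to reduce all three parts to classical facts about the moment curve by exploiting the projection $\pi:\mathbf{R}^d\to\mathbf{R}^{d-r}$ onto the first $d-r$ coordinates, which sends $x(t)$ to the ordinary moment curve $(t,t^2,\ldots,t^{d-r})$. For (1), given $d-r+1$ distinct values $t_1,\ldots,t_{d-r+1}$, the affine independence of $x(t_1),\ldots,x(t_{d-r+1})$ will be read off from the non-vanishing of a $(d-r+1)\times(d-r+1)$ minor of the matrix with columns $(1,x(t_j))^T$: the rows corresponding to the constant row and to the first $d-r$ moment-curve coordinates form precisely the Vandermonde matrix on $t_1,\ldots,t_{d-r+1}$, whose determinant $V(t_1,\ldots,t_{d-r+1})$ is nonzero by the formula recalled above. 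The additional coordinates $p_i(t)$ play no role at all in this argument.

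For (2), I would argue by contradiction. Suppose $F$ is a face of $Q$ with $\dim F=k\le d-r-1$ and with $m\ge k+2$ vertices. If $m\le d-r+1$, then part (1) makes these $m$ points affinely independent, forcing $\dim F\ge m-1\ge k+1$. If instead $m>d-r+1$, then any $d-r+1$ of them are affinely independent by (1), forcing $\dim F\ge d-r>k$. Either case contradicts $\dim F=k$, so $F$ has exactly $k+1$ vertices and is a simplex. This bifurcation on the size of $m$ is the step requiring the most care in the whole lemma, though it is not really difficult.

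For (3), I would adapt Gale's evenness condition. Given any $k$ indices $i_1,\ldots,i_k$ with $k\le\lfloor(d-r)/2\rfloor$, set $p(t):=\prod_{j=1}^{k}(t-t_{i_j})^2=\sum_{j=0}^{2k}a_j t^j$; then $\deg p=2k\le d-r$, $a_{2k}=1$, and $p$ vanishes on $\{t_{i_1},\ldots,t_{i_k}\}$ and is strictly positive elsewhere. The affine hyperplane
\[
H:=\{y\in\mathbf{R}^d:\ a_1 y_1+a_2 y_2+\cdots+a_{2k}y_{2k}=-a_0\}
\]
is a genuine hyperplane in $\mathbf{R}^d$ since $a_{2k}\ne 0$, and $x(t)\in H$ if and only if $p(t)=0$. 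Hence $H$ supports $Q$, and $H\cap Q=\conv\{x(t_{i_1}),\ldots,x(t_{i_k})\}$, which is a $(k-1)$-simplex face of $Q$ by (1). The construction is essentially the classical one for cyclic polytopes; the key observation is that the defining equation of $H$ involves only the first $2k\le d-r$ coordinates of $\mathbf{R}^d$, so the remaining ``$p_i(t)$'' coordinates of the curve are irrelevant to the neighborliness.
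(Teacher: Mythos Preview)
Your proof is correct and follows essentially the same approach as the paper. For (1) and (3) your arguments are virtually identical to the paper's (Vandermonde minor for (1); the squared polynomial $\prod_j(t-t_{i_j})^2$ yielding a supporting hyperplane using only the first $2k\le d-r$ coordinates for (3)), while for (2) the paper simply says the claim ``follows immediately from the first,'' so your bifurcation on $m$ is just a more explicit rendering of that immediate deduction. Two cosmetic remarks: your use of the symbol $p(t)$ in (3) clashes with the $p_i(t)$ already in the definition of $x(t)$, and what you invoke in (3) is the standard neighborliness hyperplane construction rather than Gale's evenness condition per se.
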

\begin{proof}Consider  $n$ real numbers  $t_1<\ldots<t_{n}$ and the corresponding points $x(t_i)$. From any $d-r+1$ points $x(t_{i_1}),\ldots, x(t_{i_{d-r+1}})$,
we form a matrix by considering them as its columns, in this same order, and adding a row of ones as the first row. The top
$(d-r+1)\times (d-r+1)$
square submatrix of this martix has
determinant $V(t_{i_1},\ldots,t_{i_{d-r+1}})$ which is nonzero, which in turn implies the first assertion.
The second assertions follows immediately from the first.

To prove the third assertion proceed as in \cite[Sec.~4.7]{Gru03}. Consider a set $S_k=\{x(t_{i_j}):j=1,\ldots,k\}$, $1\le i_j\le n$, with $k\le \floor{(d-r)/2}$, and the polynomial
\[\beta(t)=\prod_{i=1}^k(t-t_{i_j})^2=\beta_0+\beta_1t+\cdots+\beta_{2k}t^{2k}.\]
Let $b=(\beta_1,\ldots,\beta_{2k},0,\ldots,0)$ be a vector in $\mathbb{R}^d$ and $H=\{x\in \mathbb{R}^d: x\cdot b=-\beta_0\}$ a hyperplane in $\mathbb{R}^d$. Here $\cdot$ denotes the dot product of vectors.

All the points in $S_k$ are clearly contained in $H$, and for any other $x(t_{l})\in \{x(t_1),\ldots,x(t_n)\}\setminus S_k$ we have $x(t_l)\cdot b=-\beta_0+\beta(t_l)>-\beta_0$.
Thus, $S_k$ is the vertex set of a simplex face of $Q$.
\end{proof}
	
Let $n$ and $s$ be fixed integers with $n>d+s$ and $s\ge 0$ and
consider the curve $y(t)=(t,t^2,\ldots,t^{d-1},p(t))$, where \[p(t):=(n-1)^{(t-1)(d-1)}t(t+1)\cdots(t+d+s-1).\]
Let $t_i=-s-d+i$ for $i=1,\ldots,n$. The polynomial $p(t)$ has been chosen so that $p(t_{i})=0$ for $i\in\{1,\ldots,d+s\}$ and $p(t_{i})>0$ otherwise. Let $C(d,n,s):=\conv (\{y(t_1),\ldots,y(t_{n})\})$.  Also, let $T=\{t_{i}:i=1,\ldots,n\}$, $I=\{t_{i}:i=1,\ldots,d+s\}$ and $y(S):=\{y(t_{i}):t_i\in S\}$ for $S\subset T$.

The following proposition collects a number of properties of the $d$-polytope $C(d,n,s)$.
	
\begin{proposition}\label{prop:C(d,n,s)} The $d$-polytope $C(d,n,s)$ ($n>d+s$) satisfies the following properties.
\begin{enumerate}
\item $C(d,n,s)\in \Pl(d,n,s)$.
\item {\bf Gale's evenness condition}: A $d$-subset $S_d$ of $\ver C(d,n,s)$ such that $S_d\not \subset I$ forms a simplex facet if and only if, for any two elements $u,v\in T\setminus S_d$, the number of elements of $S_d$ between $u$ and $v$ on the curve $y(t)$ is even.
\end{enumerate}
	 \end{proposition}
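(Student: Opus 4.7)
The plan is to prove parts (1) and (2) in an interleaved fashion, since the claim in (1) that every facet other than $F$ is a simplex will follow cleanly from the analysis developed for (2). For part (1), I first apply \cref{lem:neighborly} with $r=1$ to the curve $y(t)$: this makes $C(d,n,s)$ a $d$-polytope that is $(d-2)$-simplicial and $\lfloor(d-1)/2\rfloor$-neighborly, so every $y(t_i)$ is a vertex and $C(d,n,s)$ has exactly $n$ vertices. Since $p(t_i)=0$ exactly for $t_i\in I$ and $p(t_i)>0$ otherwise, the hyperplane $\{x_d=0\}$ supports $C(d,n,s)$ with intersection $F:=\conv(y(I))$. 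Projecting $y(I)$ onto the first $d-1$ coordinates yields points on the moment curve in $\mathbf{R}^{d-1}$ at parameters $t_1,\ldots,t_{d+s}$, so $F$ is combinatorially a cyclic $(d-1)$-polytope with exactly $d+s$ vertices---the required non-simplex facet.

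For part (2), fix a $d$-subset $S_d=\{t_{i_1}<\cdots<t_{i_d}\}\subset T$ with $S_d\not\subset I$. By \cref{lem:neighborly}(1), the points $y(S_d)$ are affinely independent and thus span a unique hyperplane $H_{S_d}$, defined by the vanishing of
\[
D(x)=\det\begin{pmatrix}
1 & 1 & \cdots & 1 & 1\\
t_{i_1} & t_{i_2} & \cdots & t_{i_d} & x_1\\
\vdots & \vdots & \ddots & \vdots & \vdots\\
t_{i_1}^{d-1} & t_{i_2}^{d-1} & \cdots & t_{i_d}^{d-1} & x_{d-1}\\
p(t_{i_1}) & p(t_{i_2}) & \cdots & p(t_{i_d}) & x_d
\end{pmatrix}.
\]
Expanding $D(y(t))$ along the last row and collecting Vandermonde cofactors gives the factorization $D(y(t))=V(t_{i_1},\ldots,t_{i_d})\,\bigl[p(t)-L_{S_d}(t)\bigr]$, where $L_{S_d}$ is the Lagrange polynomial of degree $\le d-1$ agreeing with $p$ on $S_d$. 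Newton's form of the interpolation error then yields $p(t)-L_{S_d}(t)=p[t_{i_1},\ldots,t_{i_d},t]\cdot\prod_{k=1}^{d}(t-t_{i_k})$, and since $V(t_{i_1},\ldots,t_{i_d})>0$, the sign of $D(y(t))$ is the product of the signs of the divided difference and of $\prod_k(t-t_{i_k})$.

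The main obstacle is the key claim that $p[u_0,\ldots,u_d]>0$ for every $(d+1)$-subset $\{u_0<\cdots<u_d\}\subset T$ containing at least one $u_j\in T\setminus I$. Since $p$ vanishes on $I$, the divided-difference sum reduces to indices $j$ with $u_j\ge 1$, with alternating signs but a strictly positive top term at $j=d$. The formula $p(t)=(n-1)^{(t-1)(d-1)}q(t)$ is engineered so that $p(u_{j+1})/p(u_j)\ge(n-1)^{(d-1)(u_{j+1}-u_j)}$; this lets me pair consecutive indices $j,j+1\in J:=\{j:u_j\ge 1\}$ and show that the absolute value of the $(j+1)$st contribution strictly exceeds that of the $j$th, by weighing the exponential gain against the denominator ratio $\prod_{i\neq j}|u_j-u_i|/\prod_{i\neq j+1}|u_{j+1}-u_i|$. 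A case analysis on the integer gap $u_{j+1}-u_j$, using only the hypothesis $n>d+s$, then forces positivity of the telescoped sum.

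Granted the key claim, Gale's evenness follows immediately: $\operatorname{sign}D(y(t))=\operatorname{sign}\!\bigl(\prod_k(t-t_{i_k})\bigr)$ for $t\in T\setminus S_d$, and this last sign is constant on $T\setminus S_d$ precisely when, between any two elements $u,v\in T\setminus S_d$, the number of elements of $S_d$ strictly between them is even---the stated Gale condition. The deferred portion of (1) is then immediate: if $G\neq F$ were a facet of $C(d,n,s)$ with more than $d$ vertices, pick $S_d\subset\ver G$ with $S_d\not\subset I$ and any extra vertex $y(t_j)\in\ver G\setminus y(S_d)$; then $y(t_j)\in H_{S_d}$ forces $D(y(t_j))=0$, contradicting the key claim that $p[S_d,t_j]>0$, so every facet other than $F$ has exactly $d$ vertices and is a simplex.
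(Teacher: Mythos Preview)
Your proof is correct and, at its core, tracks the paper's argument closely: both establish that $F=\conv(y(I))$ is the unique non-simplex facet by showing the relevant $(d+1)\times(d+1)$ determinant never vanishes when at least one node lies in $T\setminus I$, and both deduce Gale's evenness from the resulting sign pattern. The paper works directly with the cofactor expansion of $E(y(t^*))$ along the last row, pairs consecutive terms $p(t_{i_j})V(j)-p(t_{i_{j-1}})V(j-1)$, and uses $|t_a-t_b|\le n-1$ together with the exponential factor $(n-1)^{(t-1)(d-1)}$ to force each pair nonnegative and the top pair strictly positive; Gale's evenness then comes from counting column swaps needed to place $y(t^*)$ in order.

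Your reformulation via Lagrange interpolation, writing $D(y(t))=V(S_d)\cdot p[S_d,t]\cdot\prod_k(t-t_{i_k})$, is a genuinely cleaner packaging of the same computation. It has two advantages: first, it isolates the single \emph{key claim} that the divided difference $p[u_0,\ldots,u_d]>0$ whenever some $u_j\in T\setminus I$, from which both the simplex-facet assertion and the Gale condition follow immediately (since the Gale sign pattern is then exactly that of $\prod_k(t-t_{i_k})$); second, it makes transparent why the paper's pairing inequality $p(t_{i_j})V(j)\ge p(t_{i_{j-1}})V(j-1)$ is the right thing to prove, since after dividing by $V(u_0,\ldots,u_d)$ it becomes precisely the comparison of consecutive terms in the divided-difference sum. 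Your ``case analysis on the integer gap'' is not really needed---the crude bound $\prod_{i\ne j,j+1}|u_{j+1}-u_i|/|u_j-u_i|\le (n-1)^{d-1}$ together with $p(u_{j+1})/p(u_j)\ge (n-1)^{(d-1)(u_{j+1}-u_j)}\cdot q(u_{j+1})/q(u_j)>(n-1)^{d-1}$ already gives strict domination, which is exactly the paper's chain of inequalities. So the two proofs coincide at the level of the decisive estimate; yours just names the objects more suggestively.
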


\begin{proof}
(1)
We first show that the first $d+s$ vertices span a facet $F$. Let   $z=(z_1,\ldots,z_d)\in \mathbb{R}^d$ and let
\[D((t_1,t_2,\ldots,t_d);z):=\begin{vmatrix}
1 &1&\cdots &1&1\\
t_1&t_2&\cdots &t_{d}&z_1\\
t_1^2&t_2^2&\cdots &t_{d}^2&z_2\\
\vdots&\vdots&\cdots&\vdots&\vdots\\
t_1^{d-1}&t_2^{d-1}&\cdots &t_{d}^{d-1}&z_{d-1}\\
p(t_1)&p(t_2)&\cdots &p(t_{d})&z_d\\
\end{vmatrix}.\]

Let $D(z):=D((t_1,t_2,\ldots,t_d);z)$ and consider the hyperplane $H_D:=\{z\in \mathbb{R}^d: D(z)=0\}$. The points $y(t_i)$ ($i=1,\ldots,d+s$) are all contained in $H_D$,
since the last row of $D(z)$ vanishes at all these  points $y(t_i)$; recall that $p(t_i)=0$ for $i=1,\ldots,d+s$. Also, by \cref{lem:neighborly}, any $d$ of these points $y(t_i)$ ($i=1,\ldots,d+s$) are affinely independent. So in fact $H_D$ equals the affine span of the points $y(t_i)$ ($i=1,\ldots,d+s$).
Let $y(t^*)\in \ver C(d,n,s)\setminus y(I)$, then $D(y(t^*))=p(t^*)V(t_1,\ldots,t_d)>0$ since $p(t^*)>0$ and $V(t_1,\ldots,t_d)>0$.
Thus, $F$ is a facet of $C(d,n,s)$.

We now show that every other facet is a simplex.
Consider any $(d+1)$-set $\{t_{i_1}<\ldots <t_{i_d}<t_{i_{d+1}}=t^*\}\subset T$ not contained in $I$. Thus, $t^*\in T\setminus I$.
Consider the determinant $E(z):=D((t_{i_1},t_{i_2},\ldots,t_{i_d});z)$. The hyperplane $H_E:=\{z\in \mathbb{R}^d: E(z)=0\}$ contains all the points $y(t_{i_j})$ ($j=1,\ldots,d$).
We need to show that $E(y(t^*))\ne 0$.

Note that $p(t)=0$ for $t\in I$ and $p(t)>0$ for $t\in T\setminus I$. Also, note that $|t_a-t_b|\le n-1$ for $t_a,t_b\in [-s-d+1,-s-d+n]$. For the sake of clarity assume $d$ is odd; the case of even $d$  is analogous. Computing $E(y(t^{*}))$ by expanding with respect to the last row gives
\begin{align*}\left(p(t^{*})V(t_{i_1},\ldots,t_{i_d})-p(t_{i_d})V(t_{i_1},\ldots,t_{i_{d-1}},t^*)\right)+\cdots\\
\quad +\left(p(t_{i_2})V(t_{i_1},t_{i_3}\ldots,t^{*})-p(t_{i_1})V(t_{i_2},\ldots,t^*)\right).
\end{align*}
The definition of $p(t)$ implies that each pair-summand is
nonnegative and the first pair-summand is
positive, and so the determinant is positive.
Indeed, for $j>1$, if $p(t_{i_j})=0$ then also $p(t_{i_{j-1}})=0$ and the corresponding pair-summand vanishes. Otherwise, let
$V(j):=V(t_{i_1},\ldots t_{i_{j-1}},t_{i_{j+1}},\ldots,t_{i_{d+1}})$ for short.
From the definition of the values of $t_{i}$ for $i=1,\ldots,n$, it follows that $t_{i_{a}}\ge t_{i_{b}}+1$ whenever $a>b$, and consequently, that
\begin{align}\label{eq:p(t)}
p(t_{i_{j}})&=(n-1)^{(d-1)(t_{i_j}-1)}\prod_{\ell=0}^{d+s-1}(t_{i_{j}}+\ell)&> (n-1)^{(d-1)
(t_{i_{j-1}}-1)}(n-1)^{d-1}\prod_{\ell=0}^{d+s-1}(t_{i_{j-1}}+\ell)\\
&&=(n-1)^{d-1}p(t_{i_{j-1}})\nonumber.
\end{align}
And from the definition of $V(j)$ it follows that
\begin{align}\label{eq:V(j)-A}
\frac{V(j)}{V(j-1)}&=\frac{t_{i_{j+1}}-t_{i_{j-1}}}{t_{i_{j+1}}-t_{i_{j}}}\cdots\frac{t_{i_{d+1}}-t_{i_{j-1}}}{t_{i_{d+1}}-t_{i_{j}}}\frac{t_{i_{j-1}}-t_{i_{j-2}}}{t_{i_{j}}-t_{i_{j-2}}}\cdots\frac{t_{i_{j-1}}-t_{i_{1}}}{t_{i_{j}}-t_{i_{1}}}.
\end{align}
Since $1\le t_{i_{a}}- t_{i_{b}}\le n-1$ whenever $a>b$, we get
\begin{align}\label{eq:V(j)-B}\frac{t_{i_{\ell}}-t_{i_{j-1}}}{t_{i_{\ell}}-t_{i_{j}}}&\ge \frac{1}{n-1}\;\text{for $\ell=j+1,\ldots,d+1$, and}\\
\frac{t_{i_{j-1}}-t_{i_{\ell}}}{t_{i_{j}}-t_{i_{\ell}}}&\ge \frac{1}{n-1}\;\text{for $\ell=1,\ldots,j-2$,}\nonumber\end{align} for each of the $d-1$ quotients in \cref{eq:V(j)-A}. In consequence, combining \cref{eq:p(t),eq:V(j)-A,eq:V(j)-B} we finally get that
\begin{align*}
\frac{V(j)}{V(j-1)}p(t_{i_{j}})> \frac{1}{(n-1)^{d-1}}(n-1)^{d-1}p(t_{i_{j-1}}).
\end{align*}
or equivalently that
\begin{align*}
V(j)p(t_{i_{j}})> V(j-1)p(t_{i_{j-1}}),
\end{align*}
as desired.
This completes the proof of the first assertion.

(2) Consider a set $S_d=\{t_{i_1}<\ldots <t_{i_d}\}\not \subset I$. Let $t^*\in T$, $t_{i_{j-1}}<t^*<t_{i_j}$ (include also the cases $t^*<t_{i_1}$ with $j=1$ and $t_{i_d}<t^*$ where we put $j=d+1$). From the above reasoning we see that if the column $y(t^*)$ in the determinant $E(y(t^*))$ is placed between the columns $y(t_{i_{j-1}})$ and $y(t_{i_j}) $ then the resulting determinant is positive. To achieve this, we swap $d-j+1$ times the column $y(t^*)$, which gives that the sign of $E(y(t^*))$ is $(-1)^{d-j+1}$. Consequently, on the curve $y(t)$, between $[-s-d+1,-s-d+n]$, the determinant $E(y(t^*))$ changes sign whenever the variable passes through one of the values $t_{i_{j}}$ ($i=1,\ldots,d$), and we are done.
\end{proof}	

A polytope $C(d,n,s)$ will be called {\it almost cyclic}. Having established in \cref{lem:neighborly} that $C(d,n,s)$ is $\floor{(d-1)/2}$-neighborly, we can compute its $h$-vector, in steps.
Recall that $P'=\partial P \setminus \{F\}$.

\begin{proposition}\label{prop:neighbourlyASP} Let $P\in \Pl(d,n,s)$ be  $\floor{(d-1)/2}$-neighborly, and $(P,F)$ the ASP-pair. Then,
\begin{align*}
h_{k}(P')&= {n-d-1+k\choose k},&\text{if $0\le k\le \floor{(d-1)/2}$};\\
h_{d-k}(P')&=  {n-d-1+k\choose k}-{s+k-1\choose k},&\text{if $1\le k\le \floor{(d-1)/2}$}.
\end{align*}

\end{proposition}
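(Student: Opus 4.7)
The plan is to use the neighborliness hypothesis to compute $h_k(P')$ directly for small $k$, and then to use the Dehn--Somerville equations of Proposition~\ref{prop:Dehm-Somerville} to obtain the complementary values $h_{d-k}(P')$.

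First I would observe that since $P'=\partial P\setminus\{F\}$ is obtained from $\partial P$ by removing only the top-dimensional face $F$, every face of $\partial P$ of dimension at most $d-2$ remains in $P'$. Hence $f_{i-1}(P')=f_{i-1}(P)$ for $1\le i\le d-1$, and the $\lfloor(d-1)/2\rfloor$-neighborliness of $P$ yields
\[
f_{i-1}(P')=\binom{n}{i}\qquad\text{for } 0\le i\le \lfloor (d-1)/2\rfloor.
\]
Substituting these values into the inversion formula $h_k(P')=\sum_{i=0}^k(-1)^{k-i}\binom{d-i}{k-i}f_{i-1}(P')$ and applying the standard alternating-sum identity $\sum_{i=0}^k(-1)^{k-i}\binom{d-i}{k-i}\binom{n}{i}=\binom{n-d-1+k}{k}$ (the same identity that computes the $h$-vector of the cyclic polytope $C(d,n)$) produces the first claimed formula.

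Next I would show that $\partial F$ is itself $\lfloor (d-1)/2\rfloor$-neighborly. Indeed, any subset $S\subseteq \ver F\subseteq\ver P$ of size at most $\lfloor (d-1)/2\rfloor$ is the vertex set of a face of $P$ by hypothesis; since $|S|-1\le \lfloor (d-1)/2\rfloor-1\le d-2<\dim F$, the simplex $S$ is a proper face of $F$, hence a face of $\partial F$. As $\partial F$ is $(d-2)$-dimensional with $d+s$ vertices, the same inversion (with $d$ replaced by $d-1$ and $n$ by $d+s$) yields
\[
h_k(\partial F)=\binom{s+k}{k}\qquad\text{for } 0\le k\le \lfloor (d-1)/2\rfloor,
\]
and Pascal's rule then gives $g_k(\partial F)=\binom{s+k}{k}-\binom{s+k-1}{k-1}=\binom{s+k-1}{k}$.

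Finally I would invoke the Dehn--Somerville relation $h_k(P')=h_{d-k}(P')+g_k(\partial F)$ from Proposition~\ref{prop:Dehm-Somerville} and solve for $h_{d-k}(P')$ to conclude
\[
h_{d-k}(P')=\binom{n-d-1+k}{k}-\binom{s+k-1}{k}\qquad\text{for } 1\le k\le \lfloor (d-1)/2\rfloor.
\]
The proof is essentially bookkeeping; the only genuine ingredients are the classical alternating binomial identity (already used in the computation of the $h$-vector of cyclic polytopes) and the verification that $\lfloor (d-1)/2\rfloor$-neighborliness descends from $P$ to $\partial F$, which is a one-line dimension count.
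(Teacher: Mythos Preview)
Your argument is correct and follows essentially the same route as the paper: compute $h_k(P')$ for $k\le\lfloor(d-1)/2\rfloor$ from neighborliness via the standard alternating binomial identity, observe that $F$ (equivalently $\partial F$) is neighborly, compute $g_k(\partial F)$, and then apply the Dehn--Sommerville relations of Proposition~\ref{prop:Dehm-Somerville}. The only difference is that you spell out why neighborliness of $P$ descends to $\partial F$, whereas the paper simply asserts that $F$ is a neighborly simplicial $(d-1)$-polytope.
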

\begin{proof} First note that $f_{k-1}(P')= {n\choose k}$ for $k\le \floor{(d-1)/2}$. Thus, it follows that
\[h_k(P')=\sum_{i=0}^k(-1)^{k-i}{d-i\choose k-i}{n\choose i}={n-d-1+k\choose k}.\]

We now consider the remaining values of $k$.
Using that $F$ is a neighborly simplicial $(d-1)$-polytope we obtain that
$g_{k}(F)= {d+s-(d-1)+k-2\choose k}$, for $0\le k\le \floor{(d-1)/2}$. Thus, from \cref{eq:DSEq-hP} of Proposition~\ref{prop:Dehm-Somerville} it follows, for $1\le k\le \floor{(d-1)/2}$, that
\[h_{d-k}(P')= {n-d-1+k\choose k}-{s+k-1\choose k}.\]\end{proof}

Observe that, for even $d$, being $\floor{(d-1)/2}$-neighborly does not determine the value of $h_{d/2}(P')$. With the help of Gale's evenness condition we can compute the number of facets of $C(d,n,s)$, and together with \cref{prop:neighbourlyASP} and \cref{eq:fk_hk}, we can compute $h_{d/2}(C(d,n,s))$ for any even $d$ as well.

\begin{proposition}\label{prop:facetsC(d,n,s)}
For the ASP-pair $(C(d,n,s),F)$ with $d$ even, consider
the simplicial ball $C':=C(d,n,s)\setminus\{F\}$. Then
\begin{align*}
f_{d-1}(C')=\left(\binom{n-d/2 -1}{d/2} + \sum_{i=0}^{d/2 -1} 2 \binom{n-d-1+i}{i}\right) - \binom{s+ d/2}{d/2}.\end{align*}
\end{proposition}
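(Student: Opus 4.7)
The plan is to enumerate the simplicial facets of $C(d,n,s)$ via the Gale-evenness condition of \cref{prop:C(d,n,s)}(2). Since $C'=\partial C(d,n,s)\setminus\{F\}$ has as facets precisely the simplicial facets of $C(d,n,s)$,
\[f_{d-1}(C')=|\{S\subseteq T:\ |S|=d,\ S\not\subseteq I,\ S\text{ satisfies Gale's evenness on }T\}|.\]
The essential observation is that the condition in \cref{prop:C(d,n,s)}(2) is formally the standard Gale-evenness condition that characterises the facets of the cyclic polytope $C(d,n)$ on $n$ ordered points, so inclusion--exclusion gives
\[f_{d-1}(C')=f_{d-1}(C(d,n))-N,\]
where $N$ counts $d$-subsets of $I$ satisfying Gale's condition on $T$.

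For $f_{d-1}(C(d,n))$, I would use that $C(d,n)$ is $\lfloor d/2\rfloor$-neighborly, so $h_i(C(d,n))=\binom{n-d-1+i}{i}$ for $0\le i\le m=d/2$, and $h_i=h_{d-i}$ by the Dehn--Somerville relations for simplicial spheres; summing these $h$-values yields $\binom{n-m-1}{m}+\sum_{i=0}^{m-1}2\binom{n-d-1+i}{i}$. To compute $N$, I would decompose each $S\subseteq I$ with $|S|=d$ by the $s$ gaps in $I\setminus S$ into an initial head of length $h_0$ followed by $s$ runs of lengths $h_1,\ldots,h_s$. Since every element of $T\setminus I$ lies strictly to the right of $I$ on the curve $y(t)$, a short case analysis of pairs $u,v\in T\setminus S$ (both in $T\setminus I$; one in $I\setminus S$ and one in $T\setminus I$; both in $I\setminus S$) shows that Gale's evenness on $T$ becomes the condition $h_j$ even for every $j\ge 1$, with $h_0$ unconstrained. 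Writing $h_j=2k_j$ for $j\ge 1$, the constraint $h_0+\sum h_j=2m$ rewrites as $k_1+\cdots+k_s\le m$ with $k_j\ge 0$; stars and bars (equivalently, the hockey-stick identity) gives $\binom{s+m}{m}$ such tuples, so $N=\binom{s+d/2}{d/2}$.

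The main technical point is the case analysis that reduces Gale-evenness on $T$ for $S\subseteq I$ to the ``head arbitrary, later runs even'' condition on $I$; the cyclic $h$-vector sum and the stars-and-bars count are routine. Assembling the three pieces yields the stated formula.
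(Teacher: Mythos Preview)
Your proposal is correct and follows essentially the same approach as the paper: count all Gale-even $d$-tuples in $T$ (giving $f_{d-1}(C(d,n))$) and subtract those lying entirely in $I$. The paper simply asserts that the discarded count equals $\binom{s+d/2}{d/2}$, whereas you supply the run-decomposition and stars-and-bars argument that justifies it; likewise the paper cites \cite[Cor.~8.28]{Zie95} for $f_{d-1}(C(d,n))$, while you recompute it via the $h$-vector, but the overall structure is identical.
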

	
\begin{proof}
The counting argument for the facets of $C'$, based on Gale evenness, goes as in the proof of the number of facets of cyclic polytopes (cf.~\cite[Cor.~8.28]{Zie95}), with the difference that we discard the Gale $d$-tuples formed solely by the first $d+s$ vertices, thus we discard exactly $\binom{s+ d/2}{d/2}$ of them.
\end{proof}

\begin{corollary}\label{thm:(k-1)C(d,n,s)}
The $h$-numbers of $C'$ are given by
\begin{align*}
h_{k}(C')&={n-d-1+k\choose k},&\text{if $0\le k\le \floor{(d-1)/2}$};\\
h_{d-k}(C')&=  {n-d-1+k\choose k}-{s+k-1\choose k},&\text{if $1\le k\le \floor{d/2}$}.
\end{align*}
\end{corollary}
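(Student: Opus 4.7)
The plan is to derive the corollary by combining three ingredients already proved in the excerpt: the neighborliness of $C(d,n,s)$ (Lemma~\ref{lem:neighborly}), the $h$-vector formula for $\lfloor (d-1)/2\rfloor$-neighborly ASPs (Proposition~\ref{prop:neighbourlyASP}), and the facet count for $C'$ (Proposition~\ref{prop:facetsC(d,n,s)}). The first two inputs cover everything except one awkward middle entry in the even-dimensional case, and that entry is pinned down by the total facet count.

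\textbf{Step 1: Direct application of Proposition~\ref{prop:neighbourlyASP}.} By Proposition~\ref{prop:C(d,n,s)}(1), $C(d,n,s)\in \Pl(d,n,s)$, and by Lemma~\ref{lem:neighborly} (applied with $r=1$ to the curve $y(t)$), $C(d,n,s)$ is $\lfloor(d-1)/2\rfloor$-neighborly. Thus Proposition~\ref{prop:neighbourlyASP} applies and yields
\[
h_k(C')=\binom{n-d-1+k}{k}\quad(0\le k\le \lfloor(d-1)/2\rfloor),\qquad h_{d-k}(C')=\binom{n-d-1+k}{k}-\binom{s+k-1}{k}\quad(1\le k\le \lfloor(d-1)/2\rfloor).
\]
When $d$ is odd, $\lfloor d/2\rfloor=\lfloor (d-1)/2\rfloor$, so both assertions of the corollary are covered and the proof is complete.

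\textbf{Step 2: The remaining middle entry when $d$ is even.} In the even case, the only value not supplied by Step~1 is $h_{d/2}(C')$. I will compute it from the total facet count. Using \eqref{eq:fk_hk} with $k=d$ gives $f_{d-1}(C')=\sum_{i=0}^{d}h_i(C')$. All summands except $h_{d/2}(C')$ are already determined: $h_i(C')$ for $0\le i\le d/2-1$ comes from the first line of Step~1; $h_i(C')$ for $d/2+1\le i\le d-1$ comes from the second line of Step~1 (substitute $k=d-i$); and $h_d(C')=0$ follows from Dehn--Somerville (Proposition~\ref{prop:Dehm-Somerville}) with $k=0$, since $h_0(C')=1$ and $g_0(\partial F)=1$. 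Substituting Proposition~\ref{prop:facetsC(d,n,s)}'s formula
\[
f_{d-1}(C')=\binom{n-d/2-1}{d/2}+\sum_{i=0}^{d/2-1}2\binom{n-d-1+i}{i}-\binom{s+d/2}{d/2}
\]
into the identity $f_{d-1}(C')=\sum_{i=0}^{d}h_i(C')$ and solving for $h_{d/2}(C')$, the symmetric pairs $2\binom{n-d-1+i}{i}$ on both sides cancel, leaving
\[
h_{d/2}(C')=\binom{n-d/2-1}{d/2}-\binom{s+d/2}{d/2}+\sum_{k=0}^{d/2-1}\binom{s+k-1}{k}.
\]
An application of the hockey stick identity $\sum_{k=0}^{m}\binom{s+k-1}{k}=\binom{s+m}{m}$ with $m=d/2-1$ collapses the last two terms into $-\binom{s+d/2-1}{d/2}$, yielding exactly the claimed value.

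\textbf{Main obstacle.} There is no conceptual obstacle: the neighborly computation handles the ``outer'' entries and Dehn--Somerville pins down $h_d$, so the only genuine content is in the middle entry for even $d$, which amounts to a routine bookkeeping exercise once Proposition~\ref{prop:facetsC(d,n,s)} is in hand. Care is only needed to match indices correctly across the symmetric halves of the $h$-vector and to apply the hockey stick identity with the right range.
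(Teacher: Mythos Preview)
Your proof is correct and follows essentially the same approach as the paper's own proof: both invoke the $\lfloor(d-1)/2\rfloor$-neighborliness of $C(d,n,s)$ together with Proposition~\ref{prop:neighbourlyASP} to handle all entries except the middle one in even dimension, and then extract $h_{d/2}(C')$ from the facet count of Proposition~\ref{prop:facetsC(d,n,s)} via \eqref{eq:fk_hk} and the hockey stick identity. Your explicit justification that $h_d(C')=0$ via Dehn--Sommerville is a small elaboration the paper leaves implicit; the phrase ``the symmetric pairs cancel'' is slightly imprecise (the $i=0$ term pairs with $h_d=0$, not with another copy of $1$), but your displayed formula with the sum starting at $k=0$ correctly accounts for this and the final computation is right.
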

\begin{proof} The case of odd $d$ was already established by \cref{prop:neighbourlyASP} since $C(d,n,s)$ is $\floor{(d-1)/2}$-neighborly.
For the case of even $d$ it remains to compute $h_{d/2}(P)$.
Equating the corresponding expression in Proposition \ref{prop:facetsC(d,n,s)} with the expression of $f_{d-1}$ in \cref{eq:fk_hk}, after substituting the known values of $h_k$ for $k\neq d/2$,  gives
\begin{align*}h_{d/2}(C')&={n-d/2-1\choose d/2} + \sum_{i=0}^{d/2 -1}{s+i-1\choose i} -{s+d/2\choose d/2}\\
\quad &= {n-d/2-1\choose d/2} -{s+d/2-1\choose d/2}
,
\end{align*}
as desired.
\end{proof}

\subsection{An upper bound theorem for almost simplicial polytopes}
We are now in a position to state an upper bound theorem for almost simplicial polytopes $P\in \Pl(d,n,s)$.

\begin{theorem}[UBT for ASP]\label{thm:UBT_ASP}
Any almost simplicial polytope $P\in \Pl(d,n,s)$ satisfies
\begin{align}
h_{k}(P')&\le {n-d-1+k\choose k},&\text{if $0\le k\le \floor{(d-1)/2}$};
\label{eq:hk}\\
h_{d-k}(P')&\le  {n-d-1+k\choose k}-{s+k-1\choose k},&\text{if $1\le k\le \floor{d/2}$}.
\label{eq:hd-k}
\end{align}
Thus,
\[f_{i-1}(P)\le f_{i-1}(C(d,n,s)\quad  \text{for $i=1,2,\ldots,d$},\]
for the almost cyclic $d$-polytope $C(d,n,s)$. Equality for some $f_{i-1}$ with $\floor{(d-1)/2}\le i\le d$ implies that $P$ is $\floor{(d-1)/2}$-neighborly.
\end{theorem}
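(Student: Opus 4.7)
The plan is to establish the $h$-vector inequalities \eqref{eq:hk} and \eqref{eq:hd-k} via a line-shelling argument paralleling McMullen's shelling proof of the classical UBT, then to derive the $f$-vector inequalities via \eqref{eq:fk_hk} and pin down the equality case from the structure of the extremal shellings.

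First, I would choose a generic Bruggesser--Mani line shelling of $\partial P$ arranged so that the nonsimplicial facet $F$ is the last facet visited; discarding this last step yields a shelling $F_1,\ldots,F_{N-1}$ of the simplicial $(d-1)$-ball $P' = \partial P - \{F\}$, and $h_k(P')$ equals the number of these facets with restriction face of size $k$. For the upper bound \eqref{eq:hk}, I would adapt McMullen's argument: for a generic line, the restriction faces of size $k \le \lfloor (d-1)/2 \rfloor$ are determined by their underlying vertex sets (i.e., the map $F_j \mapsto R_j$ is injective on such steps), and the standard induction on $h$-vector partial sums --- which works verbatim once one notes that $P'$ is a subcomplex of the simplicial sphere $\partial Q$, where $Q$ is the simplicial polytope obtained by placing a vertex $y$ beyond $F$ --- then upgrades the crude bound $h_k(P') \le f_{k-1}(P')$ to the cyclic bound $\binom{n-d-1+k}{k}$.

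For the bound \eqref{eq:hd-k} on $h_{d-k}(P')$, I would combine \eqref{eq:hk} with the Dehn-Somerville equations for balls, \cref{prop:Dehm-Somerville}, which give $h_{d-k}(P') = h_k(P') - g_k(\partial F)$. The correction $\binom{s+k-1}{k}$ equals $g_k(\partial F)$ exactly when $F$ is $k$-neighborly (the extremal case), but in general a tighter joint analysis is needed: I would reverse the line shelling and read off $h_{d-k}(P')$ directly as the number of restriction faces of size $d-k$ in the reverse, then exploit that shelling steps of the reverse whose restriction face lies entirely in $\partial F$ are in bijection with structures controlled by the missing faces of $F$, contributing at least $\binom{s+k-1}{k}$ to the gap between $h_k(P')$ and $h_{d-k}(P')$. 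The $f$-vector inequalities then follow from \eqref{eq:fk_hk} together with \cref{thm:(k-1)C(d,n,s)}.

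Finally, for the equality statement, equality $f_{i-1}(P) = f_{i-1}(C(d,n,s))$ with $\lfloor (d-1)/2 \rfloor \le i \le d$ translates, via \eqref{eq:fk_hk} and Dehn-Somerville, into equality $h_k(P') = \binom{n-d-1+k}{k}$ for some $k \le \lfloor (d-1)/2 \rfloor$. The injectivity step of the shelling argument then forces every $k$-subset of $\ver P$ with $k \le \lfloor (d-1)/2 \rfloor$ to be a face of $P$, which is precisely $\lfloor (d-1)/2 \rfloor$-neighborliness. I expect the main obstacle to be the adaptation of the restriction-face injectivity from the simplicial-sphere setting to the ball $P'$: restriction faces touching $\partial F$ require particular care, and the sharpness of the correction term $\binom{s+k-1}{k}$ in \eqref{eq:hd-k} depends on tracking these boundary-incident restriction faces precisely enough to match the neighborly-$F$ extremal configuration.
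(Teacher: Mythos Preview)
Your approach to \eqref{eq:hk} is fine and matches the paper's: this is the standard Cohen--Macaulay/shelling bound, as in \cite[Lem.~8.26]{Zie95}. The problem is with \eqref{eq:hd-k}.

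You correctly observe that Dehn--Sommerville gives $h_{d-k}(P') = h_k(P') - g_k(\partial F)$, and that combining this with \eqref{eq:hk} yields
\[
h_{d-k}(P') \le \binom{n-d-1+k}{k} - g_k(\partial F).
\]
But the inequality you then need is $g_k(\partial F) \ge \binom{s+k-1}{k}$, whereas the $g$-theorem (or just Cohen--Macaulayness of $\partial F$) gives the \emph{opposite} direction $g_k(\partial F)\le \binom{s+k-1}{k}$. Your fix---reversing the shelling and arguing that boundary-incident restriction faces contribute ``at least $\binom{s+k-1}{k}$ to the gap''---does not escape this: that gap \emph{is} $g_k(\partial F)$, so you are again asserting the wrong-direction inequality. (Also, the reverse of a shelling of the ball $P'$ is not in general a shelling; reversibility is a sphere phenomenon.) The difficulty is genuine: one can have $h_k(P')$ near its maximum while $g_k(\partial F)$ is small, so the two bounds must be coupled.

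The paper's shelling proof handles this by a different mechanism. It passes to the simplicial sphere $Q$ (vertex $y$ beyond $F$), chooses a line shelling of $Q$ that first shells $\st_Q(y)$ and then the rest of $\st_Q(v)$ for $v\in F$, and proves the key comparison (Lemma~\ref{lem:key})
\[
h^j_k(Q)-h^j_k(Q/v)\ \ge\ h^j_k(F)-h^j_k(F/v)
\]
at every step $j$. Summing over $v$ and using the identity $\sum_v h_k(Q/v)=(k+1)h_{k+1}(Q)+(d-k)h_k(Q)$ yields the recursive inequality (Proposition~\ref{lem:UBTh-Aux})
\[
h_{d-(k+1)}(P')\ \le\ \frac{n-d+k}{k+1}\,h_{d-k}(P')\ +\ \frac{n-(d+s)}{k+1}\,g_k(F),
\]
and now the \emph{upper} bound $g_{k-1}(F)\le\binom{s+k-2}{k-1}$ (the correct direction) feeds an induction on $k$ that produces \eqref{eq:hd-k} exactly. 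Your proposal is missing this coupling lemma, and without it the argument for \eqref{eq:hd-k} does not close.
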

\begin{proof}[Proof of \cref{thm:UBT_ASP} via  {\cite[Thm.~3.9]{AdiSan14}}]
The inequalities on $h_k(P')$ hold for $0\le k\le d-1$ by \cite[Thm.~3.9]{AdiSan14}, as $P'$ is a special case of a homology ball whose boundary is an induced subcomplex.
From Corollary~\ref{thm:(k-1)C(d,n,s)} and \cref{eq:fk_hk} the inequality $f_{i-1}(P)\le f_{i-1}(C(d,n,s)$ follows. Equality for some $f_{i-1}$ with  $d\ge i\ge \floor{(d-1)/2}$ implies, by \cref{eq:fk_hk}, the equality $h_{k}(P')={n-d-1+k\choose k}$ for all $0\le k\le \floor{(d-1)/2}$, and thus, again by \cref{eq:fk_hk},
that $P$ is $\floor{(d-1)/2}$-neighborly.
\end{proof}

\begin{remark}[More maximizers.]\label{rem:MoreMaximizers}
As is the case with neighborly polytopes, we expect that there are many combinatorially distinct ASPs achieving the upper bounds in the UBT for ASP. Here we sketch another such construction, based on a certain perturbation of the Cayley polytope constructed by Karavelas and Tzanaki \cite[Sec.5]{KarTza12}: there, two neighborly $(d-1)$-polytopes $P_1$ and $P_2$ are placed in parallel hyperplanes in $\mathbb{R}^d$ so that the Cayley polytope $P=\conv(P_1\cup P_2)$ is $\floor{\frac{d-1}{2}}$-neighborly and all $\floor{d/2}$-subsets with a vertex in $P_1$ and a vertex in $P_2$ form faces of $P$. Let $n=f_0(P)$ and $d+s=f_0(P_2)$.
Thus, for $d$ odd, any small enough perturbation of the vertices of $P_1$ into general position will change $P$, by considering the new convex hull, into $Q\in \Pl(d,n,s)$ ($(Q,P_2)$ is the ASP-pair) with $f(Q)=f(C(d,n,s))$; so $Q$ is a maximizer.
For $d$ even, in order for $Q$ to be a maximizer, we need the small perturbation be such that all $d/2$-subsets of $\ver(P_1)$ become faces of $Q$. To achieve this, we recall more from the construction of \cite{KarTza12}, and make a variation:
consider the images of the functions $\gamma_1(t,z_1,z_2)=(t,z_1 t^{d-1},t^2,t^3,\ldots,t^{d-2},z_2 t^d)\subset \mathbb{R}^d$ and
$\gamma_2(t,z_1)=(z_1 t^{d-1},t,t^2,\ldots,t^{d-2},-1)\subset \mathbb{R}^{d-1}\times\{-1\} \subset \mathbb{R}^d$.
The vertices of $P_1$ (resp. $P_2$) are on appropriate locations on the curve $\gamma_1(t,z_1^*,0)\subset \mathbb{R}^{d-1}\times\{0\}$ (resp. $\gamma_2(t,z_1^*)$, for small enough fixed $z_1^*>0$. It is possible to show, by appropriate determinant computation, that for a small enough fixed $z_2^*>0$, perturbing the vertices $\gamma_1(t_i,z_1^*,0)$ of $P_1$ to $\gamma_1(t_i,z_1^*,z_2^*)$ makes all $d/2$-subsets of $P_1$ faces of $Q$; so $Q$ is a maximizer.
\end{remark}

We proceed by producing an alternative and elementary proof of the UBT for ASP, via shelling. This will take the rest of this section.
Our proof follows ideas from the proof of the classical UBT by McMullen, cf.~\cite[Sec.8.4]{Zie95}, and from a recent work of Karavelas and Tzanaki \cite{KarTza12}. The key new ingredient is Lemma~\ref{lem:key} below, for which we need some preparation.

Let $P\in \Pl(d,n,s)$ and $(P,F)$ an ASP-pair. Let $Q$ be a polytope obtained from $P$ by stacking a new vertex $y$ beyond $F$. The $d$-polytope $Q$ is simplicial. The set of proper faces of $Q$ is the disjoint union of the faces of the complex $P':=\partial P \setminus\{F\}$ and the faces of $Q$ which contain $y$. Consequently, for all $k\ge0$,
\begin{equation}\label{eq:hQ=P+F}
h_k(Q)=h_k(P')+h_{k-1}(F).
\end{equation}

Recall the star $\st_{\mathcal{C}}(F)$ of a face $F$ in a polytopal complex $\mathcal C$ is the polytopal subcomplex generated by all the faces of $\mathcal{C}$ containing $F$.

We will use a line shelling of $Q$ with some special properties:
\begin{lemma}\label{lem:shellingOrder}
Let $(P,F)$ be an ASP-pair, and $v\in F$ a vertex.
Then we can choose the aforementioned vertex $y$ beyond $F$ such that, for $(P,F,y,Q)$ as above, there is a line shelling of $Q$ which shells the star of $y$ first and then proceeds to shell the rest of the star of $v$.  \end{lemma}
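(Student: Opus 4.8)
The plan is to use the Bruggesser–Mani line shelling construction, choosing the viewpoint line carefully so that the star of $y$ is shelled first and then the remaining facets of $\st_Q(v)$ follow consecutively. First I would fix the ASP-pair $(P,F)$ and the vertex $v\in F$. Recall that in a line shelling of a polytope $Q$, one picks a generic directed line $\ell$ that passes through the (relative) interior of some facet $G_1$, and the shelling order is: first the facets visible from the points of $\ell$ as one travels in the positive direction to $+\infty$ (ordered by when they become visible), then, after ``wrapping around at infinity'', the facets visible coming back from $-\infty$ (ordered by when they become invisible). The key point is that which facet is $G_1$, and the local geometry of $\ell$ near its two ``passage points'' through $\partial Q$, controls the beginning and the end of the shelling.

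The construction proceeds in two stages. First, I would choose the stacking vertex $y$: since $y$ must be beyond $F$, it lies in the open region beyond the facet $F$ of $P$ and beyond no other facet; within that (full-dimensional) region there is still freedom, and I would place $y$ close to the vertex $v$ — more precisely, near the ray from $v$ pointing away from $P$ in a direction transverse to $\aff F$ — so that in $Q=\conv(P\cup\{y\})$ the facets of $\st_Q(y)$ are ``clustered near $v$'' together with the facets of $P$ incident to $v$. Second, I would choose the line $\ell$: take $\ell$ to be a generic line through a point just outside $v$ (on the far side of $Q$ from the bulk of $Q$), directed so that the first facet it pierces, $G_1$, is one of the facets of $\st_Q(y)$, and so that as $\ell$ exits to $+\infty$ it sweeps out exactly the star of $y$ and then immediately the rest of the star of $v$ before reaching any facet disjoint from $\{y,v\}$. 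Equivalently, one can phrase this via the point-pulling/pushing viewpoint: $\ell$ should ``see'' $v$ (and with it all facets through $v$, of which the facets through $y$ are a sub-bundle) before it sees anything else. Since $\st_Q(v)$ is a shellable ball whose boundary is a sphere, and the facets of $\st_Q(y)$ form a subcomplex of $\st_Q(v)$ that is itself a shellable ball (it is the antistar-type piece $\st_Q(y)$, combinatorially a ball over the link of $y$ which is $\partial F\cong$ a stacked/simplicial sphere), a line through a point beyond $v$ can be perturbed to shell first the $y$-facets, then the remaining $v$-facets; this is essentially the statement that a line shelling started ``at $v$'' can have its initial segment refined, in the spirit of Courdurier's lemma on refining the initial portion of a shelling (cf.~the commented-out \cite{Cou06} remark).

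The main obstacle I expect is making the second stage rigorous: guaranteeing that there is a single line $\ell$ whose forward sweep realizes the star of $y$ and then the rest of the star of $v$ as an \emph{initial segment} of the shelling, with no ``foreign'' facet (one avoiding both $y$ and $v$) sneaking in between. The clean way around this is: (i) observe that the facets visible from a point $w$ chosen beyond the vertex $v$ of $Q$ are \emph{exactly} the facets of $\st_Q(v)$; (ii) choose $\ell$ through such a $w$, very close to $w$ and generic, so that the first $f_{d-1}(\st_Q(v))$ facets in the line shelling are precisely the facets of $\st_Q(v)$, in some order — this is the standard fact that the beginning of a line shelling through a point near $w$ records exactly the facets seen from $w$; and (iii) within that prescribed initial block, further perturb $w$ (or equivalently tilt $\ell$) so that the facets containing $y$ come before the facets containing $v$ but not $y$; this last refinement is possible because $\st_Q(y)$ is a shellable ball glued to the rest of $\st_Q(v)$ along part of its boundary, so a shelling of $\st_Q(v)$ can be chosen to shell $\st_Q(y)$ first (any shellable ball has a shelling beginning with any prescribed shellable subball meeting the rest in a ball of codimension one along its boundary — here that intersection is controlled by the induced structure of $\partial F$ in $P'$). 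Having arranged $G_1\in\st_Q(y)$ and the block ordering, the tail of the shelling (coming back from $-\infty$) is irrelevant to the claim, so we are done.
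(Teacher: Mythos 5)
There is a genuine gap, and it is already visible in the sentence ``the facets of $\st_Q(y)$ form a subcomplex of $\st_Q(v)$''. They do not: the facets of $Q$ containing $y$ are exactly the pyramids $\conv(G\cup\{y\})$ over the facets $G$ of $F$, and whenever $F$ has more than $d$ vertices most of these $G$ do not contain $v$. Consequently $\st_Q(y)\not\subseteq\st_Q(v)$, and your steps (i)--(iii) cannot work as stated: a generic line through a point $w$ beyond the \emph{vertex} $v$ produces a line shelling whose initial block is exactly $\st_Q(v)$, which omits every facet $\conv(G\cup\{y\})$ with $v\notin G$; those facets then appear only later in the shelling, so the star of $y$ is not shelled first. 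Placing $y$ ``close to $v$'' in stage one does not help, because the region of points beyond $F$ yields one and the same combinatorial polytope $Q$ regardless of where $y$ sits in it. A second, independent problem is step (iii): reordering the facets inside the initial block by appealing to a Courdurier-type concatenation lemma produces a \emph{shelling}, not a \emph{line} shelling, whereas the lemma (and its later use, where one restricts $S(Q)$ to $\st_Q(v)$ via \cite[Lem.~8.7]{Zie95}) genuinely needs a line shelling; one cannot ``tilt $\ell$'' to impose an arbitrary admissible order on the visible facets.

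The correct construction reverses the order of your two stages: first choose an oriented line $\ell$ giving a line shelling of the polyhedral complex $\partial P$ that begins with $F$ and then continues with the remaining facets of $\st_P(v)$ (this exists by \cite[Thm.~8.12, Cor.~8.13]{Zie95}); then place $y$ \emph{on} $\ell$, just beyond $F$, to form $Q$; finally perturb $\ell$ so that it meets $Q$ near $y$. Because $\ell$ exits $Q$ next to $y$, all facets of $\st_Q(y)$ become visible first, and the facets of $Q$ not containing $y$ --- which are precisely the facets of $P$ other than $F$ --- then appear in the order prescribed by the original shelling of $\partial P$, i.e.\ the rest of $\st_Q(v)$ comes next. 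If you want to repair your argument, the essential missing idea is that the line must be anchored at $y$ (not at a point beyond $v$) and that its direction at infinity, inherited from the shelling of $P$, is what forces the facets of $\st_Q(v)\setminus\st_Q(y)$ to follow immediately.
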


\begin{proof}
For an oriented line $\ell$ that shells $P$, with $F$ being first followed by the rest of the facets in the star of $v$ (cf.~\cite[Thm.~8.12, Cor.~8.13]{Zie95}\footnote{Here we use the extension of the notion of shellability to polyhedral complexes.}), place $y$ on $\ell$ beyond $F$ to make $Q$. Now perturb $\ell$, intersecting $Q$ near $y$, to obtain the desired line shelling.
\end{proof}

Consider any vertex $v\in \ver Q$ and let $S(Q)$ be a shelling of the facets of $Q$. Then, clearly,
\begin{itemize}
\item The restriction of $S(Q)$ to $\st_Q(v)$ yields a shelling of $\st_Q(v)$  (cf. \cite[Lem.~8.7]{Zie95}); denote it by $S_v(Q)$.
\item A shelling of $\st_Q(v)$ induces a shelling of $\lk_Q(v)$ by deleting $v$ from the facets.
\item Since $\partial F=\lk_Q(y)$, it follows that $S(Q)$ induces a shelling $S(F)$ of $F$.
\item  Recursively, $S(F)$ induces a shelling of $\lk_F(v)$ if $v\in \ver F$.
\end{itemize}

Now consider any vertex $v\in F$, a shelling $S(Q)$ as guaranteed in Lemma~\ref{lem:shellingOrder}, and the induced shellings $S(Q/v)$ of $Q/v$, $S(F)$ of $F$ and $S(F/v)$ of $F/v$. Recall that if $\st_K(v)$ is simplicial then the boundary complex of $K/v$ coincides with $\lk_{K}(v)$.

Following \cite[Sec.~4]{KarTza12}, call the facet
 $F_j$ of $Q$ \emph{active} if it is the new facet to be added to the shelling process $S(Q)$. Let $F_j|F$ be the active facet of $S(F)$ which is the restriction of $F_j$ to $F$ (if $y\in F_j$). Let $F_j/v$ be the active facet of $S(Q/v)$ induced by $F_j$ (if $v\in F_j$), $F_j|F/v$ be the active facet of $S(F/v)$ induced by $F_j|F$ (if $vy\subset F_j$), and $F_j|v$ be the active facet of $S_v(Q)$ induced by $F_j$ (if $v\in F_j$); so $F_j|v=\{v\}\cup F_j/v$ in this case.
Let $R_j\subseteq F_j$, $R_j/v\subseteq F_j/v$, $R_j|F\subseteq F_j|F$, $R_j|F/v\subseteq F_j|F/v$, and $R_j|v$ be the corresponding new minimal faces in the shellings $S(Q)$,  $S(Q/v)$, $S(F)$, $S(F/v)$, and $S_v(Q)$ respectively.

Finally, let $h_k^j(Q)$ denote the value of $h_k$ up to step $j$, namely $h_k(\cup_{i\le j}F_i)$, and similarly for the other complexes.

The following key lemma allows us to relate the difference in $h$-numbers along a shelling of $Q$ and $F$ to that of $Q/v$ and $F/v$.

\begin{lemma}\label{lem:key}
For any vertex $v\in \ver F$ and at any step $j$ of the shelling $S(Q)$ which is guaranteed by \cref{lem:shellingOrder}, we have, for all $k\ge 0$, that
\[h^j_{k}(Q)-h^j_k(Q/v) \ge  h^j_k(F)-h^j_k(F/v).\]
\end{lemma}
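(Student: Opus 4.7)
The plan is to proceed by induction on $j$, tracking the increment at each step to the quantity
\[T^j_k := h^j_k(Q) - h^j_k(Q/v) - h^j_k(F) + h^j_k(F/v),\]
and showing that it is always non-negative. Since $T^0_k = 0$ for every $k$, this will suffice.

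First I would derive relations between the restriction faces in the four induced shellings. Starting from the standard characterization that $x \in R_j$ iff $F_j \setminus \{x\} \subseteq F_i$ for some $i<j$, together with the trivial observation that $v \in F_j \setminus \{x\}$ whenever $v \in F_j$ and $x \neq v$ (and similarly for $y$), one obtains $R_j/v = R_j \setminus \{v\}$ when $v \in F_j$, $R_j|F = R_j \setminus \{y\}$ when $y \in F_j$, and $R_j|F/v = R_j \setminus \{v,y\}$ when $v,y \in F_j$.

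The crux will be two structural facts about the specific line shelling produced by \cref{lem:shellingOrder}. \emph{(i)} For $j \ge 2$ in phase A (where $\st_Q(y)$ is shelled), $y \notin R_j$; this is easy, since $F_j \setminus \{y\} \subseteq F_i$ with $y \in F_i$ and $|F_i| = d = |F_j \setminus \{y\}|+1$ would force $F_i = F_j$. \emph{(ii)} For $j$ in phase B (facets with $v \in F_j$, $y \notin F_j$), $v \notin R_j$; this will be the main obstacle. The argument I plan to give is: if $v \in R_j$, then the witnessing earlier facet $F_i$ cannot lie in phase B (a similar size argument forces $F_i = F_j$), so it lies in phase A, whence $F_i = (F_j \setminus \{v\}) \cup \{y\}$ and $F_j \setminus \{v\}$ must be a facet of $\partial F$; in particular $F_j \subseteq \ver F$, so by convexity $F_j \subseteq F$ as polytopes, and since both are $(d-1)$-dimensional faces of $P$, $F_j = F$, contradicting $F_j \in P'$.

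With (i) and (ii) in place, a case analysis by phase and type of $F_j$ becomes routine. In phase A, each of the three sub-cases---$v \notin F_j$, or $v \in F_j$ with $v \notin R_j$, or $v \in R_j$---gives an increment of $0$ to $T^j_k$, as the contributions to $h_k(Q)$, $h_k(Q/v)$, $h_k(F)$, $h_k(F/v)$ cancel in pairs using the relations above together with $y \notin R_j$. In phase B, observation (ii) removes the potentially negative contribution and the increment is again $0$. In phase C, only $h^j_k(Q)$ grows, by $[|R_j| = k] \ge 0$. Thus $T^j_k$ is non-decreasing in $j$ and starts at $0$, which completes the proof.
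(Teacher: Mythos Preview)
Your proposal is correct and follows essentially the same three–phase strategy as the paper: equality of the two sides while shelling $\st_Q(y)$ (phase A) and the remainder of $\st_Q(v)$ (phase B), and a nonnegative increment afterwards (phase C). Your presentation is somewhat more systematic—you first record the identities $R_j/v=R_j\setminus\{v\}$, $R_j|F=R_j\setminus\{y\}$, $R_j|F/v=R_j\setminus\{v,y\}$ and then do a clean case split—whereas the paper argues the same cancellations more directly in each phase; your convexity argument for (ii) (forcing $|F_j|\subseteq |F|$ and hence $F_j=F$) is exactly the paper's argument for $R_j=R_j/v$, just organized a bit more tightly. One small typo: in (ii) you write ``$F_j\subseteq\ver F$'' where you mean ``$\ver F_j\subseteq\ver F$''.
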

\begin{proof}
While shelling $\st_Q(y)$, the minimal face $R_j$ of $F_j$ in $S(Q)$ and the minimal face $R_j|F$ of $F_j|F$ in $S(F)$ coincide at every step, since $F=Q/y$ and $S(Q)$ shells the star of $y$ first.
Therefore, while shelling $\st_Q(y)$, for all $k\ge0$, it follows that \[h^j_k(Q)=h^j_k(F).\]

For the same reason, if $F_j\in \st_Q(v)\cap \st_Q(y)$, then, regardless of whether $v\in R_j$ or  $v\not\in R_j$, we have, for all $k\ge 0$, that
\[h^j_k(Q/v)=h^j_k(F/v).\]

Thus, while shelling $\st_Q(y)$, it follows for all $k\ge0$ that
\[h^j_{k}(Q)-h^j_k(Q/v)= h^j_k(F)-h^j_k(F/v).\]

After the shelling has left $\st_Q(y)$, we get no new contributions to $h_k(F)$  or $h_k(F/v)$ for all $k\ge0$, so the RHS does not change.

After shelling $\st_Q(y)$ and while still shelling $\st_Q(v)$, we have that the minimal faces $R_j$ and $R_j/v$ of $S(Q)$ and $S(Q/v)$, respectively, coincide, so the LHS does not change either.
To see that $R_j=R_j/v$, first note that $R_j/v \subseteq R_j$ (as the complex at the $j$-th step of $S(Q)$ contains the complex at the $j$-th step of $S_v(Q)$). We show the reverse containment.
Assume by contradiction that there is a facet $F"$ of $F_j$
which is in the subcomplex $\cup_{i< j}F_i$ of $Q$ but not in
the subcomplex  $\cup_{i< j,\ v\in F_i}F_i$ of $\st_Q(v)$.
As we have already left $\st_Q(y)$, $y\notin F_j$ so $F"$ is a facet of $F$.
However, also $v\in F$, so
we must have $v\in F"$, as otherwise,
by convexity, $|F_j| \subset |F|$, a contradiction.
But then the (unique) facet $F_i$ in $\st_Q(y)$ containing $F"$ is also in $\st_Q(v)$, a contradiction.

Thus, for all $k\ge 0$,
\[ h^j_{k}(Q)-h^j_k(Q/v)= h^j_k(F)-h^j_k(F/v). \]

After the shelling has left $\st(v,Q)$ we may get new contributions to $h_k(Q)$ but not any more to $h_k(Q/v)$. This concludes the proof of the lemma.
\end{proof}

\begin{proposition}\label{lem:UBTh-Aux}
Let $P\in \Pl(d,n,s)$ and $(P,F)$ be an ASP-pair.
Then, for all $k\ge 0$, we have
\begin{align*}
h_{d-(k+1)}( P')&\le\frac{n-d+k}{k+1}h_{d-k}(P')+\frac{n-(d+s)}{k+1}g_k(F).
\end{align*}
Equivalently, for all $k\ge 0$, we have
\begin{align*}
h_{k+1}( P')&\le\frac{n-d+k}{k+1}h_{k}( P')-\frac{s+k}{k+1}g_k( F)+g_{k+1}( F).
\end{align*} 	
\end{proposition}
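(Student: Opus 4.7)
The plan is to combine the double-counting identity
\[\sum_{v\in \ver Q} h_k(Q/v) = (k+1)h_{k+1}(Q) + (d-k)h_k(Q),\]
valid for any simplicial $d$-polytope $Q$, with \cref{lem:key}. This identity is standard: for any shelling of $\partial Q$ with restriction faces $R_j$, the induced shelling of $\lk_Q(v)$ has restriction face $R_j\setminus\{v\}$ on $F_j\setminus\{v\}$, and collecting contributions over pairs $(v,F_j)$ with $v\in F_j$ (splitting into the cases $v\in R_j$ and $v\notin R_j$) yields the right hand side after grouping by $|R_j|$.

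Partition $\ver Q=\{y\}\cup\ver F\cup(\ver P\setminus\ver F)$ and split the sum accordingly. Since $y$ is stacked beyond $F$ we have $Q/y\cong F$, so $h_k(Q/y)=h_k(F)$. For $v\in\ver F$, \cref{lem:key} gives $h_k(Q/v)\le h_k(Q)-h_k(F)+h_k(F/v)$. For $v\in\ver P\setminus\ver F$ the apex $y$ is not adjacent to $v$ in $Q$, so $Q/v=P/v$. Summing \cref{lem:key} over $v\in\ver F$ and applying the double-counting identity to the simplicial $(d-1)$-polytope $F$, which yields $\sum_{v\in\ver F}h_k(F/v)=(k+1)h_{k+1}(F)+(d-1-k)h_k(F)$, gives an upper bound on $\sum_{v\in\ver F}h_k(Q/v)$.

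Substituting these bounds back into the main identity and passing from $Q$ to $P'$ via $h_k(Q)=h_k(P')+h_{k-1}(F)$, direct algebraic manipulation reduces the required inequality to the auxiliary bound
\[\sum_{v\in\ver P\setminus\ver F}h_k(P/v)\le(n-d-s)\,h_k(P').\]
Since $|\ver P\setminus\ver F|=n-d-s$, this is an averaging statement over the interior vertex figures, and I expect it to be the main obstacle. I would prove it by an analog of \cref{lem:key}: for each interior $v$, choose a shelling of $Q$ that shells $\st_Q(y)$ first and then $\st_Q(v)$ (possible since $y\notin\lk_Q(v)$, so $\st_Q(y)$ and $\st_Q(v)$ share no facets), and track $h^j_k(Q)-h^j_k(Q/v)$ throughout the shelling to conclude $h_k(P/v)\le h_k(P')$ pointwise. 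Finally, rewriting the resulting inequality using $g_k(F)=h_k(F)-h_{k-1}(F)$ and $g_{k+1}(F)=h_{k+1}(F)-h_k(F)$ gives the equivalent second form in the proposition, and the first form follows from the Dehn--Somerville identity of \cref{prop:Dehm-Somerville}.
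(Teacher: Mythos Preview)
Your proposal is correct and follows essentially the same approach as the paper: both combine the double-counting identity for $Q$, \cref{lem:key} for boundary vertices, the analogous identity for $F$, the relation $h_k(Q)=h_k(P')+h_{k-1}(F)$, and a shelling bound $h_k(P'/v)\le h_k(P')$ for interior vertices. The only notable difference is that the paper obtains the interior-vertex bound more directly---by line-shelling $P'$ so that $\st_{P'}(v)$ comes first---rather than routing through $Q$ and an analog of \cref{lem:key}; your detour works but is unnecessary.
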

\begin{proof}
The second inequality follows from the first by the Dehn-Sommerville relations (\cref{eq:DSEq-hP}). For the first inequality, we have the following sequence of equalities.
\begin{align}
\sum_{v\in \ver Q} h_k(Q/v)&=(k+1)h_{k+1}(Q)+(d-k)h_{k}(Q)\nonumber\\
\ &=(k+1)h_{d-(k+1)}( Q)+(d-k)h_{d-k}( Q)
\nonumber\\
\ &=(k+1)\left(h_{d-(k+1)}( P')+h_{d-(k+1)-1}( F)\right) +(d-k)h_{d-k}( P')\nonumber\\
&\quad +(d-k)h_{d-k-1}( F)
\nonumber\\
\ &=\left((k+1)h_{d-(k+1)}( P')+(d-k)h_{d-k}( P')\right)+(k+1)h_{k+1}( F)\nonumber\\
&\quad+(d-1-k)h_{k}( F)+h_{k}( F)
\nonumber\\
\ &=(k+1)h_{d-(k+1)}( P')+(d-k)h_{d-k}( P')+h_{k}(F)+\sum_{v\in \ver  F} h_k(F/v).
\label{eq:SumlkQ}
\end{align}
For the first equality, see, e.g., \cite[Eq.~8.27a]{Zie95}, while for the second, use Dehn-Sommerville \cref{eq:DSEq-hP} for $Q$. The third equality follows from \cref{eq:hQ=P+F}, the forth from \cref{eq:DSEq-hP} again, this time for $F$, and the last equality follows from \cite[Eq.~8.27a]{Zie95} again.

As $ F\cong Q/y$, \cref{eq:SumlkQ} then becomes
\begin{align}
\sum_{v\in \ver  P'\setminus \ver F} h_k(Q/v)+\sum_{v\in \ver  F} \left(h_k(Q/v)- h_k(F/v)\right)&=(k+1)h_{d-(k+1)}( P')+(d-k)h_{d-k}( P')
\label{eq:SumlkQ1}.
\end{align}
From Lemma~\ref{lem:key} and the fact that any vertex $v\in \ver  P'\setminus \ver F$ has the same link in both $Q$ and $P'$, it then follows that
\begin{align}
\sum_{v\in \ver  P'\setminus \ver F} h_k(Q/v)+\sum_{v\in \ver  F} \left(h_k(Q/v)- h_k(F/v)\right)&\le \sum_{v\in \ver  P'\setminus \ver F} h_k( P'/v)\nonumber\\
&\quad+\sum_{v\in \ver  F} \left(h_k( Q)- h_k( F)\right)
\label{eq:SumlkQ2}.
\end{align}
Let $v\in \ver  P'\setminus \ver F$. There is
a shelling of $P'$ that shells $\st_{P'}(v)$ first
-- just perturb a line through an interior point of $F$ and $v$, so it still intersects $P$ near those two points, to obtain such line shelling.
Such shelling
shows that $h_k(P'/v) \le h_k( P')$ for all $k\ge0$. Consequently, from \cref{eq:SumlkQ1,eq:SumlkQ2} it follows that
\begin{align*}
(k+1)h_{d-(k+1)}( P')+(d-k)h_{d-k}( P')&\le \sum_{v\in \ver  P'\setminus \ver F} h_k( P')+\sum_{v\in \ver  F} \left(h_k( Q)- h_k( F)\right)\\
&= (n-(d+s))h_k( P')+(d+s)(h_k( Q)- h_k( F))\\
&= nh_k( P')+(d+s)(h_{k-1}( F)- h_k( F))\\
&= n(h_{d-k}( P')+g_k( F))-(d+s)g_k( F).
\end{align*}
The last two equations ensue from \cref{eq:hQ=P+F} and \cref{eq:DSEq-hP}, respectively. Hence, the desired inequality follows.
\end{proof}	
We are now in a position to prove the inequalities of Theorem~\ref{thm:UBT_ASP} using the shelling approach.

\begin{proof}[Proof of Theorem \ref{thm:UBT_ASP} via Shellings]
The inequalities \cref{eq:hk} for $P'$ in Theorem \ref{thm:UBT_ASP}  hold for any Cohen-Macaulay complex, and can also be proved exactly as in \cite[Lem.~8.26]{Zie95}. We prove \cref{eq:hd-k} by induction on $k$. The case $k=1$ holds with equality by \cref{eq:DSEq-hP}:
$h_{d-1}( P')= n-d-s$.
Suppose now that the inequality \cref{eq:hd-k} holds for $k-1\le \floor{d/2}-1$. By \cref{lem:UBTh-Aux},
\begin{align*}
h_{d-k}( P')&\le \frac{n-d+k-1}{k}\left( {n-d-1+k-1\choose k-1}-{s+k-2\choose k-1}\right)+\frac{n-(d+s)}{k}g_{k-1}( F).
\end{align*}
From the application of the $g$-theorem\footnote{In fact, this consequence follows easily just from the fact that $\partial F$ is a Cohen-Macaulay complex.}, cf. \cite[Cor.~8.38]{Zie95}, to $F$, it follows that
$g_{k-1}( F)\le {d+s-(d-1)+k-3\choose k-1}$ for $0\le k-1\le \floor{d/2}-1$. Thus, the previous inequality becomes   	
\begin{align*}
h_{d-k}( P')&\le \frac{n-d+k-1}{k}\left( {n-d-1+k-1\choose k-1}-{s+k-2\choose k-1}\right)+\frac{n-(d+s)}{k} {s+k-2\choose k-1}\\
&= {n-d-1+k\choose k} -{s+k-1\choose k},\;
\end{align*}
as desired.\end{proof}

\section*{Acknowledgments}
The authors thank the referees for their detailed comments, as well as  Karim Adiprasito, Gil Kalai and Isabella Novik for helpful feedback on an earlier version of this paper.

\providecommand{\bysame}{\leavevmode\hbox to3em{\hrulefill}\thinspace}
\providecommand{\MR}{\relax\ifhmode\unskip\space\fi MR }
\providecommand{\MRhref}[2]{%
  \href{http://www.ams.org/mathscinet-getitem?mr=#1}{#2}
}
\providecommand{\href}[2]{#2}

\end{document}